\renewcommand{\PrintDOI}[1]{\href{http://dx.doi.org/\detokenize{#1}}{doi: \detokenize{#1}}%
  \IfEmptyBibField{pages}{, (to appear in print)}{}}
\def\commutatif{\ar@{}[rd]|{\circlearrowleft}}
\newtheorem{thm}{Theorem}[section]
\newtheorem{pro}[thm]{Proposition}
\newtheorem{lemma}[thm]{Lemma}
\newtheorem{cor}[thm]{Corollary}
\newtheorem{conj}[thm]{Conjecture}
\theoremstyle{definition}
\newtheorem{definition}[thm]{Definition}
\newtheorem{qst}[thm]{Question}
\theoremstyle{remark}
\newtheorem{remark}[thm]{Remark}
\newtheorem{exs}[thm]{Examples}
\newcommand{\Z}{\mathbb{Z}}
\def\Aut{\operatorname{Aut}}
\def\Inn{\operatorname{Inn}}
\def\a{\alpha}
\def\b{\beta}
\def\g{\gamma}
\date{}
\date{\today}
\title{ Idempotents and Powers of Ideals in Quandle Rings}
\author{Valeriy Bardakov}
\address{Regional Scientific and Educational Mathematical Center of Tomsk State University, 36 Lenin Ave., Tomsk, Russia.}
\address{Sobolev Institute of Mathematics, 4 Acad. Koptyug avenue, 630090, Novosibirsk, Russia.}
\address{Novosibirsk State Agrarian University, Dobrolyubova street, 160, Novosibirsk, 630039, Russia.}
\email{bardakov@math.nsc.ru}
\author{Mohamed Elhamdadi} 
\address{Department of Mathematics and Statistics, 
University of South Florida, Tampa, FL 33620, U.S.A.} 
\email{emohamed@usf.edu} 
\begin{document}
\maketitle

\begin{abstract}

This article addresses two central problems in the theory of quandle rings. 
First, motivated by Conjecture 3.10 in Internat. J. Math. 34 (2023), no. 3, Paper No. 2350011: for a semi-latin quandle \(X\), every nonzero idempotent in the integral quandle ring \(\mathbb{Z}[X]\) necessarily corresponds to an element of \(X\), we investigate idempotents in quandle rings of semi-latin quandles.  Precisely, we prove that if the ground ring is an integral domain with unity, then the quandle ring of Core($\mathbb{Z}$) admits only trivial idempotents.  Second, powers of augmentation ideals in quandle rings have only been computed in a few cases previously.  We extend the computations to include dihedral quandles and commutative quandles.  Finally, we examine idempotents in quandle rings of \(2\)-almost latin quandles and apply these results to compute the automorphism groups of their integral quandle rings.

\end{abstract}

\tableofcontents

\section{Introduction}
Quandles are algebraic structures whose axioms are motivated by the three Reidemeister moves in knot theory.  They were introduced independently by D. Joyce \cite{Joyce} and S. Matveev \cite{Matveev} in the nineteen eighties.  Quandles have been used extensively to construct invariants of knots,  knotted surfaces and virtual knots (see for example \cite{EN, CJKLS, CES, CEGS}).  Quandles also arise in a variety of mathematical contexts, including set-theoretic solutions to the Yang-Baxter equation \cite{BES}, group theory \cite{EMR, BDS, BNS}, category theory \cite{CCES} and representation theory \cite{EM, ESZ}.  Concepts from ring theory were introduced to quandle theory in \cite{BPS2}.  Precisely, a notion of a quandle ring was introduced by analogy with a group ring  and relationships between subquandles of the given quandle and ideals of the associated quandle ring were investigated in \cite{BPS2}.  It was proved in \cite{EFT} that quandle rings of non-trivial quandles are not power-associative rings and that if the ground ring is Noetherian and the quandle is finite then the quandle ring is both left and right Noetherian.  Zero divisors in quandle rings were investigated in \cite{BPS1}.  An initiation to homological study of quandle rings was given in \cite{EMSZ} where a complete characterization of derivations of quandle algebras of dihedral quandles over fields of characteristic zero has been given, and dimension of the Lie algebra of derivations has been investigated.   The question of whether a given quandle ring possesses non-trivial idempotents was initiated in  \cite{BPS1}. In \cite{ENSS}  a complete characterization of the idempotents of certain quandle coverings was given.  In \cite{ENS} some applications of quandle rings to knot theory were introduced.  Precisely, an enhancement of the coloring quandle invariant was obtained by considering the algebra homomorphisms between the quandle ring of a link, and a fixed target quandle ring.  Furthermore, when the idempotent set of a quandle ring is a quandle itself, it was shown that the quandle coloring by the idempotent quandle gives an enhancement of the quandle coloring invariant.  The combination of idempotents in quandle rings and quandle $2$-cocycles was used in \cite{ES} to obtain an enhancement of the coloring invariant in \cite{CESY} thus distinguishing knots with up to \emph{thirteen} crossings knots using only \emph{twenty one} connected quandles.
 
 In this article, we treat quandle rings as algebraic objects in their own right rather than by their connections with knot theory as in \cite{ENS, ES}.  We consider two main problems.  The first problem is the investigation of Conjecture 3.10 in \cite{ENSS}.  Second,  we extend the computations of the powers of the augmentation ideals  to include dihedral quandles and commutative quandles.  
 
 The remainder of this article is organized as follows.  In Section~\ref{review} we review the basics of quandles and quandle rings.  Section~\ref{infinitequandles} deals with infinite quandles.  In particular, we prove that if the ground ring is unital without zero-divisors then the quandle ring of the Core quandle of infinite cyclic group  has only trivial idempotents (cf. Theorem~\ref{Thm3.1}). Remark  that the Core quandle $Core(\Z)$ is semi-latin but not latin. On the other hand, this quandle is left orderable but is not right orderable.  In Section~\ref{Idemp-Dihedral} we compute the powers of the augmentation ideal for dihedral quandles.  Section~\ref{PowerIdealCommutative} gives a complete classification of the powers of the augmentation ideal over the commutative quandles of odd prime orders $5$ and $7$ (cf. Proposition~\ref{CommutativeZ5} and Proposition~\ref{CommutativeZ7}).  In Section~\ref{2-almost} we initiate the study of idempotents over $m$-almost latin quandles.  As an example, for the involutory connected quandle $X$ of order \emph{six},  we completely characterize the idempotents over its integral quandle ring $\mathbb{Z}[X]$ (cf. Proposition~\ref{IdX}).  As a consequence, we obtain the automorphisms of the quandle ring $\mathbb{Z}[X]$.  Section~\ref{Questions:} gives some open questions for further research.

\section{Basics of Quandles and Quandle Rings}\label{review}
We review the basics of quandles needed for this article. All definitions and known results can be found in the book \cite{BES}.

\begin{definition}
A {\it quandle} $X$ is a 
set with a binary operation $(x, y) \mapsto x * y$
satisfying the following conditions.
\begin{eqnarray*}
& &\mbox{\rm (1) \ }   \mbox{\rm  For any $x \in X$,
$x*x=x$.} \label{axiom1} \\
& & \mbox{\rm (2) \ }\mbox{\rm For any $y \in X$, the right multiplication $R_y: x \mapsto x*y$ is a bijection.} \label{axiom2} \\
& &\mbox{\rm (3) \ }  
\mbox{\rm For any $x,y,z \in X$, we have
$ (x*y)*z=(x*z)*(y*z). $} \label{axiom3} 
\end{eqnarray*}
\end{definition}
Notice that Axioms (2) and (3) can be combined to state that $R_y$ is an automorphism of the quandle $(X,*)$. The notions of quandle homomorphisms and isomorphisms are natural.  The following are some examples of quandles.
  \begin{itemize}
\item
Any non-empty set $X$ with the operation $x*y=x$ for any $x,y \in X$ is
a quandle called a  {\it trivial} quandle.

\item
Any group $G$ with the binary operation $x*y=yxy^{-1}$ is a quandle called a {\it conjugation quandle}.

\item
The binary operation $x*y=yx^{-1}y$ defines a quandle structure on any group $G$ called the \emph{Core} quandle of $G$ and denoted Core($G$).  If furthermore, $G=\mathbb{Z}_n$ is the abelian group of integers modulo $n$, where $n$ is a positive integer, then the operation is denoted by $x * y=-x+2y$  and the quandle is called the \emph{dihedral} quandle and denoted by $R_n$.

\item

A {\it generalized Alexander quandle} is defined  by a pair $(G, f)$ where 
$G$ is a  group and $f \in {\rm Aut}(G)$,
and the quandle operation is defined by 
$x*y=f(xy^{-1}) y $. 
If $G$ is abelian, this is called an {\it Alexander quandle}. 

\item
A function $\psi: X \times X \rightarrow A$ for an abelian group $A$ is 
called a {\it quandle $2$-cocycle}  \cite{CJKLS} if it satisfies 
$$ \psi (x, y)+ \psi(x*y, z) =\psi(x,z) +  \psi(x*z, y*z)$$
and $\psi(x,x)=0$ for all $x,y,z \in X$.
For a quandle $2$-cocycle $\psi$ the set $X \times A$ is a quandle under the operation 
 $(x, a) * (y, b)=(x*y, a+\psi(x,y))$ for $x, y \in X$, $a,b \in A$,
and it is called an {\it abelian extension} of $X$ by $A$, see  \cite{CENS}.

  \end{itemize}
  Given a quandle $X$, the subgroup $\Inn(X)$, of the automorphism group $\Aut(X)$, is the group generated by the right multiplications $R_x$.  When the natural action of $\Inn(X)$ on $X$ is transitive then $X$ is said to be a \emph{connected} quandle. \\
  \par

Throughout this article, $\mathbbm k$ denotes an integral domain with unity unless otherwise specified.\\

Let $(X, *)$ be a quandle and  $\mathbbm{k}[X]$ be the set of all formal expressions of the form $\sum_{x \in X }  \alpha_x e_x$, where $\alpha_x \in \mathbbm{k}$ such that all but finitely many $\alpha_x=0$.  The set $\mathbbm{k}[X]$ is a free $\mathbbm{k}$-module with basis $\{e_x \mid x \in X \}$ and admits a product given by 
 $$ \Big( \sum_{x \in X }  \alpha_x e_x \Big) \Big( \sum_{ y \in X }  \beta_y e_y \Big)
 =   \sum_{x, y \in X } \alpha_x \beta_y e_{x * y},$$
where $x, y \in X$ and $\alpha_x, \beta_y \in \mathbbm{k}$. This turns $\mathbbm{k}[X]$ into a ring called the {\it quandle ring} (or the {\it quandle algebra}) of $X$ with coefficients in $\mathbbm{k}$.  When $X$ is a non-trivial quandle then the quandle ring $\mathbbm{k}[X]$ is non-associative ring.  There is a natural identification of $X$ with a subset of $\mathbbm{k}[X]$ via the map sending $x$ to $e_x$. 
\par
The surjective ring homomorphism $\varepsilon: \mathbbm{k}[X] \rightarrow \mathbbm{k}$ given by $$\varepsilon \Big(\sum_{x \in X }  \alpha_x e_x \Big)=\sum_{x \in X }  \alpha_x$$
is called the {\it augmentation map}. The kernel $\Delta(X)$ of $\varepsilon $ is a two-sided ideal of $\mathbbm{k}[X]$, called the {\it augmentation ideal} of $\mathbbm{k}[X]$. 

 {Notation:}   Through the whole article, it is important to make a distinction between the product in the quandle, denoted by $*$, and the product, denoted by $\cdot$ or juxtaposition, in the quandle ring.  
 
\medskip

One of the interesting problems in the theory of quandle rings is the determination of idempotents, that is, elements $u \in \mathbbm{k}[X]$ such that $u^2 = u$.  Any element of $X$ is an idempotent, called a trivial idempotent. A more interesting case of quandle rings is when $\mathbbm{k} = \Z$.  
 For a full description of idempotents of  $\Z[X]$ one needs to understand the structure of $X$. 
If  $X = \coprod_i X_i$ is the disjoint union of quandles, then for the set of idempotents $I(\Z[X])$, the following inclusion holds $I(\Z[X]) \supseteq \coprod_i I(\Z[X_i])$.
Also, if 
 $X = \{ {1}, {2}, \ldots, {m} \}$ is the $m$ element trivial quandle, then 
 $$
 I(\Z[X]) = \{ e_{1} + \sum_{j=2}^m \alpha_j (e_{j} - e_{1}),~~\alpha_j \in \Z \}
  $$
(see Proposition 13.21 of \cite{BES}).  One of the interesting classes of quandles is the class of \emph{latin} quandles. Recall that a quandle is called {\it latin} if left multiplication by each element is a bijection.
In particular, the  dihedral quandles of odd orders  are latin.   
For the class of latin quandles, we have the following well-known conjecture

 \begin{conj}\label{conjid}  (\cite[Conjecture 13.19]{BES})  Let $X$ be a finite latin quandle, and let $u$  be  a non-zero idempotent of the integral quandle ring $\mathbb{Z}[X]$. Then
 
 \begin{enumerate}
 \item
 The element $u$ is a trivial idempotent;

\item
 $\varepsilon(u) = 1$. 
 \end{enumerate}
\end{conj}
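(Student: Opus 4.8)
The plan is to separate the two assertions and dispose of (2) almost for free. Since $\varepsilon\colon\Z[X]\to\Z$ is a ring homomorphism, $u^2=u$ forces $\varepsilon(u)^2=\varepsilon(u)$, and as $\Z$ is a domain $\varepsilon(u)\in\{0,1\}$. Hence (2) is equivalent to the single statement ``$\Delta(X)$ contains no nonzero idempotent'', and together with (1) the whole conjecture reduces to showing that the only idempotents of $\Z[X]$ are $0$ and the basis elements $e_x$. From this point I would write $u=\sum_{x\in X}\alpha_x e_x$ and unwind $u^2=u$ into the quadratic system
$$\alpha_z\;=\;\sum_{\substack{(x,y)\in X\times X\\ x*y=z}}\alpha_x\alpha_y\qquad(z\in X),$$
the point being that the latin hypothesis makes \emph{every row} of the Cayley table $[\,x*y\,]$ a permutation of $X$ (its columns being permutations already, by Axiom (2)): for each $x$ there is a unique $y$ with $x*y=z$, so the system becomes $\alpha_z=\sum_x\alpha_x\,\alpha_{\ell_x^{-1}(z)}$, where $\ell_x\colon y\mapsto x*y$, and also $\alpha_z=\sum_y\alpha_{R_y^{-1}(z)}\,\alpha_y$. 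This double parametrization by single permutations is exactly the rigidity one has to exploit.

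I would then study the support $S=\{x:\alpha_x\neq0\}$ and try to force $|S|\le 1$, after which $u=\alpha e_x$ with $\alpha^2=\alpha$, so $u\in\{0,e_x\}$ and both parts follow. The cleanest mechanism is an extremal argument when $X$ carries a translation-invariant total order; this is the case for the infinite semi-latin cousin $\mathrm{Core}(\Z)$, which is left-orderable, and it is the route behind Theorem~\ref{Thm3.1}. Concretely, for $\mathrm{Core}(\Z)$ one has $x*y=2y-x$, so the coefficient of $e_z$ in $u^2$ is $\sum_y\alpha_{2y-z}\alpha_y$; taking $m=\min S$, $M=\max S$ and the extremal index $z=2M-m$ collapses the right-hand side to the single term $\alpha_m\alpha_M$ (any $y\in S$ with $2y-2M+m\in S$ must equal $M$), while the left-hand side is $0$ unless $M=m$, since $2M-m\notin S$ when $M>m$. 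As $\mathbbm{k}$ is a domain this gives $\alpha_m\alpha_M=0$, contradicting $m,M\in S$; hence $|S|\le 1$. The same argument would run in a finite orderable setting — but finite nontrivial quandles are not left-orderable (an order-preserving injective endomorphism of a finite chain is the identity), which is precisely where the difficulty of Conjecture~\ref{conjid} sits.

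For genuinely finite latin $X$ I would instead work prime by prime: reduce the quadratic system modulo a prime $p\mid|X|$, or pass to $\mathbb{F}_p[X]$ and $\mathbb{C}[X]$ and bring in the representation theory of the left-multiplication group together with the connectivity of $X$ (every latin quandle is connected, as a single right translation $R_c$ realizes $a\mapsto b$) to pin down the admissible supports; for quandles of small fixed order one simply solves the finite quadratic Diophantine system by hand. The latter is the practical route for the order-$6$ involutory connected example, where it yields the complete idempotent description of Proposition~\ref{IdX} and, as a by-product, the automorphism group $\Aut(\Z[X])$.

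The step I expect to be the main obstacle is the finite latin case in full generality: there is no ordering to run the extremal argument, no single representation-theoretic invariant that uniformly controls all latin quandles, and the cross terms $\sum_{x\neq z}\alpha_x\,\alpha_{\ell_x^{-1}(z)}$ admit neither a sign nor a useful bound over $\Z$, so the quadratic system does not obviously trivialize. Accordingly I would regard the realistic target not as a proof of Conjecture~\ref{conjid} in full, but as verifying it (and its semi-latin analogue) for new families — $\mathrm{Core}(\Z)$, dihedral quandles, commutative quandles, and $2$-almost latin quandles of small order — which is what the computations of the following sections are designed to do.
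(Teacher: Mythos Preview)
The statement is a \emph{conjecture}, and the paper does not prove it; it is quoted from \cite{BES} as an open problem, and the surrounding text only shows that it fails over fields (via the central idempotent $\frac{1}{n}w$) and records partial evidence over $\Z$. Your proposal recognises this, and your final paragraph is exactly right: you do not have, and should not claim, a proof of Conjecture~\ref{conjid} in general.

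Your preliminary reductions are correct. Since $\varepsilon$ is a ring map into the domain $\Z$, any idempotent has $\varepsilon(u)\in\{0,1\}$; more to the point, item (1) already implies item (2), because a trivial idempotent $e_x$ has $\varepsilon(e_x)=1$. So the conjecture is equivalent to (1) alone, and your reformulation of (2) as ``$\Delta(X)$ has no nonzero idempotent'' is the weaker half. Your extremal argument for $\mathrm{Core}(\Z)$ is correct and is precisely the proof of Theorem~\ref{Thm3.1}; you also correctly diagnose why it does not transfer to finite latin quandles (no compatible order).

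Where your proposal diverges from the paper is in the finite case. The paper does \emph{not} proceed via reduction mod $p$ or representation theory of the left-multiplication group; instead it attacks (2) by computing powers of the augmentation ideal (Propositions~\ref{pro4.8}, \ref{CommutativeZ5}, \ref{CommutativeZ7}), showing $\bigcap_k\Delta^k=0$ in these cases and hence that $\Delta$ contains no nonzero idempotent, and it attacks (1) by writing down and analysing the quadratic Diophantine systems explicitly. For the $2$-almost latin order-six quandle (Proposition~\ref{IdX}) the paper does not solve the system by brute force as you suggest, but uses the quandle epimorphism $X\to R_3$ to push idempotents forward to $\Z[R_3]$, invokes the known triviality there, and then handles the fibres. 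Your representation-theoretic suggestion is a reasonable line of attack, but it is speculative and is not what the paper does; treat it as a direction for further work rather than as part of a proof.
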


\begin{remark}
Conjecture~\ref{conjid} does not extend to quandle rings over arbitrary fields. Indeed, if $\mathbbm{k}$ is a field, then part~(1) of the conjecture is disproved by item~(2) of the following proposition, whereas part~(2) is disproved by Proposition~\ref{pro4.9}.
\end{remark}


\begin{pro} (\cite[Proposition 12.18]{BES})
Let $X = \{ x_1, \ldots, x_n \}$ be a finite latin quandle. Then the following  assertions hold:
\begin{enumerate}
\item The element $w = e_{x_1} + \cdots + e_{x_n}$ lies in the center  $Z(\mathbbm{k}[X])$.
\item If $\mathbbm{k}$ is a field with characteristic not dividing $n$, then $\frac{1}{n}w$ is idempotent.
\end{enumerate}
\end{pro}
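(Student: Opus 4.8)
The plan is to reduce everything to the single identity that $w$ acts as multiplication by the augmentation on both sides. First I would compute $w\cdot e_y$ for a basis element $e_y$: by definition $w\cdot e_y=\sum_i e_{x_i*y}=\sum_i e_{R_y(x_i)}$, and since Axiom (2) makes $R_y$ a bijection of $X$, the multiset $\{R_y(x_i)\}_i$ is again all of $X$, so $w\cdot e_y=w$. Symmetrically, $e_y\cdot w=\sum_i e_{y*x_i}=\sum_i e_{L_y(x_i)}$, where $L_y\colon x\mapsto y*x$ is the left multiplication; this is the only place the latin hypothesis enters, namely that $L_y$ is a bijection, whence again $e_y\cdot w=w$. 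Note that the right-hand identity holds for every quandle, and only the left-hand one needs latinness.

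Extending by $\mathbbm{k}$-linearity, for any $a=\sum_x\alpha_x e_x$ I get $w\cdot a=\sum_x\alpha_x(w\cdot e_x)=\varepsilon(a)\,w=\sum_x\alpha_x(e_x\cdot w)=a\cdot w$, so $w$ commutes with every element of $\mathbbm{k}[X]$. To conclude that $w$ lies in the center I would also verify the associator conditions, since the ring is non-associative: using $w\cdot a=a\cdot w=\varepsilon(a)\,w$ repeatedly, together with the fact that $\varepsilon$ is a ring homomorphism, each of $(wa)b$, $w(ab)$, $(aw)b$, $a(wb)$, $(ab)w$, $a(bw)$ collapses to $\varepsilon(a)\varepsilon(b)\,w$. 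This gives part (1).

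For part (2), I apply the identity $w\cdot a=\varepsilon(a)\,w$ with $a=w$: since $\varepsilon(w)=n$, we get $w^2=n\,w$. When the characteristic of the field $\mathbbm{k}$ does not divide $n$, the scalar $n$ is invertible, and $\bigl(\tfrac1n w\bigr)^2=\tfrac1{n^2}w^2=\tfrac1{n^2}\,n\,w=\tfrac1n w$, so $\tfrac1n w$ is idempotent.

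I do not expect a genuine obstacle: the entire content sits in the bijectivity of $R_y$ and $L_y$. The only point needing a little care is the bookkeeping for the non-associative notion of center in part (1) — one must not stop at $wa=aw$ but also check the associator identities — and that is where I would be most careful, though it is routine once $w\cdot a=\varepsilon(a)\,w=a\cdot w$ is established.
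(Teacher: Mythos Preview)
Your proof is correct. Note, however, that the paper does not supply its own proof of this proposition: it is quoted verbatim from \cite[Proposition~12.18]{BES} and stated without argument, so there is nothing in the present paper to compare your approach against. Your argument is the standard one and exactly what one would expect the cited reference to contain: bijectivity of $R_y$ (always) and of $L_y$ (latin hypothesis) gives $w\cdot a = a\cdot w = \varepsilon(a)\,w$, whence $w^2 = n\,w$ and the idempotence of $\frac{1}{n}w$ when $n$ is invertible. Your attention to the associator conditions in the non-associative setting is appropriate and is the only subtlety; the verification you give is correct since $\varepsilon$ is multiplicative.
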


For quandle rings which have only trivial idempotents, it is easy to find the automorphism group of the quandle ring.

\begin{pro}\label{AutLatin}
	
	If a quandle ring $\mathbbm{k}[X]$ has only trivial idempotents,  then $\Aut(\mathbbm{k}[X]) = \Aut(X).$

\end{pro}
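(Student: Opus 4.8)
The plan is to exhibit the obvious map in one direction and then use the idempotent hypothesis to prove it is onto. First I would record the easy inclusion: any quandle automorphism $\phi\in\Aut(X)$ extends $\Z$-linearly to $\bar\phi\colon\Z[X]\to\Z[X]$, $\bar\phi\bigl(\sum_{x}\alpha_x e_x\bigr)=\sum_{x}\alpha_x e_{\phi(x)}$, and since $\bar\phi(e_x\cdot e_y)=\bar\phi(e_{x*y})=e_{\phi(x*y)}=e_{\phi(x)*\phi(y)}=\bar\phi(e_x)\cdot\bar\phi(e_y)$, the map $\bar\phi$ is a ring endomorphism; it is bijective because $\phi$ is, so $\bar\phi\in\Aut(\Z[X])$. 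One checks on the basis that $\phi\mapsto\bar\phi$ is an injective group homomorphism $\Aut(X)\hookrightarrow\Aut(\Z[X])$. Thus the real content of the statement is the surjectivity of this homomorphism.

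For surjectivity, take $\Phi\in\Aut(\Z[X])$. Being a ring homomorphism it is in particular additive, hence $\Z$-linear, so $\Phi$ is completely determined by the images of the basis vectors $e_x$, $x\in X$. Each $e_x$ is an idempotent of $\Z[X]$ because $x*x=x$, so $\Phi(e_x)$ is again an idempotent, and it is nonzero since $\Phi$ is injective and $e_x\neq 0$. By hypothesis the only idempotents of $\Z[X]$ are $0$ and the basis elements $e_x$, $x\in X$, so there is a well-defined map $\sigma\colon X\to X$ with $\Phi(e_x)=e_{\sigma(x)}$ for all $x$.

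It then remains to see that $\sigma\in\Aut(X)$ and that $\bar\sigma=\Phi$. Running the same argument for $\Phi^{-1}$ produces $\tau\colon X\to X$ with $\Phi^{-1}(e_x)=e_{\tau(x)}$; from $e_{\sigma(\tau(x))}=\Phi(\Phi^{-1}(e_x))=e_x$ and its mirror image we get $\sigma\tau=\tau\sigma=\Id_X$, so $\sigma$ is a bijection. Multiplicativity of $\Phi$ gives $e_{\sigma(x*y)}=\Phi(e_{x*y})=\Phi(e_x\cdot e_y)=e_{\sigma(x)}\cdot e_{\sigma(y)}=e_{\sigma(x)*\sigma(y)}$, and comparing basis vectors yields $\sigma(x*y)=\sigma(x)*\sigma(y)$; hence $\sigma\in\Aut(X)$. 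Finally $\Phi$ and $\bar\sigma$ are both $\Z$-linear and agree on the basis $\{e_x\mid x\in X\}$, so $\Phi=\bar\sigma$ lies in the image, and the two groups coincide.

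The argument is essentially formal, so I do not expect a genuine obstacle. The one point that must be handled carefully is the precise reading of ``only trivial idempotents'': one must use that the idempotent set of $\Z[X]$ is exactly $\{0\}\cup\{e_x\mid x\in X\}$ with the $e_x$ pairwise distinct (the natural identification of $X$ with a subset of $\Z[X]$), so that the hypothesis genuinely forces $\Phi$ to permute the canonical basis rather than merely to send it into the idempotent set. It is also worth noting in passing that for nontrivial $X$ the ring $\Z[X]$ is non-unital, so ``ring automorphism'' carries no unit-preservation condition, and for the one-element quandle the statement is the trivial $\Aut(\Z)=\{1\}=\Aut(X)$.
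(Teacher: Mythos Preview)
The paper does not actually prove this proposition: it is stated immediately after the remark ``For quandle rings which have only trivial idempotents, it is easy to find the automorphism group of the quandle ring'' and left without proof. Your argument is correct and is precisely the natural one the authors have in mind---ring automorphisms send nonzero idempotents to nonzero idempotents, the hypothesis forces these to be basis vectors $e_x$, and multiplicativity then yields a quandle automorphism. Your careful remarks about reading ``trivial idempotents'' as $\{0\}\cup\{e_x:x\in X\}$ and about the ring being non-unital are well placed but do not point to any genuine difficulty.
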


In the following sections, we study idempotents in the quandle rings of several special types of quandles.

\section{Infinite  Quandles} \label{infinitequandles}

The question of idempotents in quandle rings can be formulated not only for finite latin quandles but for infinite latin quandles and for semi-latin quandles.

As an example, consider the Core($\mathbb{Z})$ in which the operation is defined by the rule $a * b = 2 b - a$, for all $a, b \in \mathbb{Z}$. Let $L_a$, where $a \in \mathbb{Z}$, be the left multiplication operator; that is $L_a(x) = a * x$ for every $x \in \mathbb{Z}$. We see that $L_2 (x) = 2(x - 1)$.

Hence, $L_2$ is injective but not bijective, and therefore it is not invertible. Furthermore, $L_a^2(x)=4x-3a\neq x,$
so $L_a$ is not an involution. On the other hand, if $R_a$, where $a\in\mathbb{Z}$, denotes the right multiplication operator, then $R_a$ is an automorphism and satisfies $R_a^2=\operatorname{id}$.

Next, we study the structure of the augmentation ideal $\Delta(Core(\mathbb{Z}))$. To this end, we regard $\mathbb{Z}$ as the multiplicative group generated by $t$ and write
\[
\mathbb{Z}=\{\ldots,t^{-1},1,t,\ldots\}.
\]
Then the binary operation on $Core(\mathbb{Z})$ is given by
\[
t^i*t^j=t^{2j-i}.
\]
For simplicity, we denote the basis element $e_{t^i}$ of the quandle ring by $e_i$. Thus,
\[
e_i\cdot e_j=e_{2j-i}.
\]

The set $\mathbb{Z}$ is equipped with the linear order given by
\[
t^i<t^j \quad \text{ if} \quad i<j.
\]
Furthermore, for any $t^k \in \mathbb{Z}$ we have
$$
t^i * t^k = t^{2k-i} > t^j* t^k = t^{2k-j}.
$$
On the other side
$$
t^k * t^i = t^{2i-k} < t^k * t^j = t^{2j-k}.
$$
 Hence, this order is preserved under multiplication on the left and is not preserved under multiplication on the right. 
 The quandle $Core(\mathbb{Z})$ is left orderable but not right orderable  \cite{BPS1} (see also \cite[Propositions 11.3 and 11.5]{BES}).

Let us take
$$
u = \sum_{m \leq i \leq M} \alpha_i e_i,~~\alpha_i  \in \mathbb{Z},
$$
where we assume that  $\alpha_m \alpha_M \not= 0$. In this case we say that $\alpha_m e_m= \min (u)$, $\alpha_M e_M = \max (u)$, and $supp(u) \subseteq [m, M]$.
We want to find idempotents in $\mathbb{Z}[Core(\mathbb{Z})]$. For element $u$ we have 
$$
u^2 = \sum_{i, j} \alpha_i \alpha_j e_{2j-i}.
$$
Let us extend the linear order from $\mathbb{Z}$ to $\mathbb{Z}[Core(\mathbb{Z})]$ by the rule 
$$
\alpha e_a < \beta e_b ~\mbox{if and only if}~\alpha \beta \not= 0~\mbox{and}~a < b.
$$

The maximal and minimal elements in $u^2$ are defined by the rules
$$
\min(u^2) = \max(u) \min(u),~~\max(u^2) = \min(u) \max(u).
$$
Using this observation, we can prove the following result

\begin{thm}\label{Thm3.1}
If $\mathbbm{k}$ is an integral domain with unity,
then  $\mathbbm{k}[Core(\mathbb{Z})]$ has only trivial idempotents. 
\end{thm}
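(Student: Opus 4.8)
The plan is to exploit the linear order on $\mathbbm{k}[Core(\mathbb{Z})]$ recorded above together with the fact that $\mathbbm{k}$ has no zero divisors, and to show that any nonzero idempotent is forced to be a single basis element. First I would note that every element has finite support, so writing a putative nonzero idempotent as $u = \sum_{m \leq i \leq M} \alpha_i e_i$ with $\alpha_m \alpha_M \neq 0$ makes sense, and $supp(u) \subseteq [m, M]$. Squaring, $u^2 = \sum_{i, j} \alpha_i \alpha_j e_{2j - i}$ with $i, j$ ranging over $supp(u)$.

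The crucial elementary observation is that, among all such pairs, the exponent $2j - i$ attains its maximal value $2M - m$ only at $(i, j) = (m, M)$ --- because $i \geq m$ and $j \leq M$ give $2j - i \leq 2M - m$ with equality iff $i = m$ and $j = M$ --- and likewise its minimal value $2m - M$ only at $(i, j) = (M, m)$. Since $\mathbbm{k}$ is an integral domain, the corresponding coefficient $\alpha_m \alpha_M$ does not vanish, so $\max(u^2) = \alpha_m \alpha_M e_{2M - m}$ and $\min(u^2) = \alpha_m \alpha_M e_{2m - M}$ are genuinely the top and bottom terms of $u^2$, as already stated before the theorem.

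Next I would impose $u = u^2$ and compare top terms: $\max(u) = \alpha_M e_M$ must equal $\max(u^2) = \alpha_m \alpha_M e_{2M - m}$, forcing $2M - m = M$, i.e. $M = m$. Hence $u = \alpha_m e_m$, and then $u^2 = \alpha_m^2 e_m = u$ gives $\alpha_m^2 = \alpha_m$; as $\mathbbm{k}$ is an integral domain and $\alpha_m \neq 0$ this yields $\alpha_m = 1$, so $u = e_m$ is a trivial idempotent.

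I do not expect a real obstacle here: the whole argument rests on the single order-theoretic uniqueness statement, and the only place the hypothesis on $\mathbbm{k}$ enters is in guaranteeing that the extreme-index terms of $u^2$ do not cancel and that $\alpha_m^2 = \alpha_m$ has no solution other than $0$ and $1$. The one point needing mild care is that $supp(u)$ is only contained in, not necessarily equal to, the interval $[m, M]$; this does not affect the extremal computation since $m$ and $M$ both lie in $supp(u)$.
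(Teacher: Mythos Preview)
Your proof is correct and follows essentially the same approach as the paper: both identify the extremal terms $\alpha_m\alpha_M e_{2M-m}$ and $\alpha_m\alpha_M e_{2m-M}$ of $u^2$ (nonzero because $\mathbbm{k}$ is a domain) and compare with those of $u$. The only cosmetic difference is that the paper argues by showing $\min(u^2) < \min(u)$ and $\max(u) < \max(u^2)$ when $m<M$, hence $u^2\neq u$, whereas you assume $u^2=u$ and deduce $M=2M-m$, i.e.\ $m=M$; these are contrapositives of the same computation.
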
 


\begin{proof}
Let 
$$
u = \alpha_m e_m + \ldots + \alpha_M e_M, ~~\alpha_m, \alpha_M \in \mathbbm{k} \setminus \{0 \}
$$
be a non-zero element in $\mathbbm{k}[Core(\mathbb{Z})]$ and $\min(u) = \alpha_m e_m$, $\max(u) = \alpha_M e_M$.

If $m = M$, then $u = \alpha_m e_m$ is an idempotent if and only if $\alpha_m = 1$. This idempotent is trivial.

Let $m < M$. Then
$$
\min(u^2) = \max(u) \min(u) = \alpha_M e_M \alpha_m e_m = \alpha_M  \alpha_m e_{2m-M},
$$
$$
\max(u^2) = \min(u) \max(u) = \alpha_m e_m \alpha_M e_M  = \alpha_m  \alpha_M e_{2M-m}.
$$
Let us show that the following inequalities hold
$$
\min(u^2) < \min(u) < \max(u) < \max(u^2).
$$
The first inequality means
$$
\alpha_M  \alpha_m e_{2m-M} < \alpha_m e_{m} \Leftrightarrow 2m-M < m \Leftrightarrow m < M.
$$

The second inequality holds by definition.  The third  inequality means
$$
\alpha_M e_{M} < \alpha_m  \alpha_M e_{2M-m}  \Leftrightarrow M < 2M - m \Leftrightarrow m < M.
$$

From the proved inequalities follows that $u^2 \not= u$.
\end{proof}

It is easy to check that  if $X$ is an infinite field, then $Core (X)$ with operation $a * b = 2 b - a$ is a latin quandle.

\begin{qst}
Is it possible to replace $\Z$ in Theorem \ref{Thm3.1} with an arbitrary  ordered (abelian) group $G$? 

\end{qst}

\begin{qst}
Is it true that for any unital ring $\mathbbm {k}$ without zero-divisors  and an infinite field $X$ the quandle ring $\mathbbm {k} [Core(X)]$ has only trivial idempotents?
\end{qst}

\begin{qst}
What can we say about  $\Delta^2(Core(\mathbbm {Z}))$? In particular, is it true that $E_1 = e_1 - e_0$ lies in $\Delta^2(Core(\mathbbm {Z}))$?
\end{qst}

\subsection{Commutator Subalgebra}

Let $A = \mathbbm {Z}[Core(\mathbbm {Z})]$ and $A'$ be its commutator subalgebra. That is the subalgebra generated by ring commutators $[u, v] = u v - v u$, $u, v \in A$. It is easy to see that $A' $ is generated by commutators
$$
[e_i, e_j] = e_i e_j - e_j e_i = e_{i*j} -  e_{j*i} =  e_{2j -i} -  e_{2 i -j}.
$$
Hence $A' \subseteq \Delta(Core(\mathbbm {Z}))$. One can naturally ask if the other 
inclusion is true? 

We have the following

\begin{pro}
The following inclusion holds  $ \mathbbm {Z}[Core(3\mathbbm {Z})]  \subseteq A'$. 
\end{pro}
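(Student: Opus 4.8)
The plan is to reduce the inclusion to its additive generators and to realize those generators explicitly as ring commutators. Recall from the discussion preceding the statement that $A'$ is the subalgebra generated by the commutators $[e_i,e_j]=e_{2j-i}-e_{2i-j}$; write $C$ for their $\mathbb{Z}$-span, so that $C\subseteq A'$. The point to notice is the congruence $(2j-i)-(2i-j)=3(j-i)$: every commutator is therefore a difference $e_a-e_b$ of two basis vectors with $a\equiv b\pmod 3$, and conversely, for any $a\equiv b\pmod 3$ the integers $i=\tfrac{a+2b}{3}$ and $j=\tfrac{2a+b}{3}$ are well defined and a one-line computation gives $[e_i,e_j]=e_a-e_b$. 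Hence $C$ is exactly the $\mathbb{Z}$-span of $\{\,e_a-e_b : a\equiv b\pmod 3\,\}$.

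Applying this to the subquandle $Core(3\mathbb{Z})$, whose basis vectors $e_{3k}$ all have index divisible by $3$: for all $k,\ell\in\mathbb{Z}$ one has $e_{3k}-e_{3\ell}=[\,e_{k+2\ell},\,e_{2k+\ell}\,]\in C\subseteq A'$, and these differences generate, as an abelian group, the augmentation ideal of $Core(3\mathbb{Z})$; hence $\Delta(Core(3\mathbb{Z}))\subseteq A'$. (Since $A'\subseteq\Delta(Core(\mathbb{Z}))$ was observed just above, the left-hand side of the statement is to be understood as this augmentation ideal rather than the full module, as e.g. $e_0\notin\Delta(Core(\mathbb{Z}))$.)

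To round off the picture I would also determine $A'$ exactly and so see that the inclusion is sharp. For $r\in\{0,1,2\}$ let $\lambda_r\colon A\to\mathbb{Z}$ be the linear map sending $\sum\alpha_i e_i$ to the sum of the $\alpha_i$ over those $i$ with $i\equiv r\pmod 3$; by the description of $C$ above, $C=\ker\lambda_0\cap\ker\lambda_1\cap\ker\lambda_2$. From $2j-i\equiv-(i+j)\pmod 3$ one gets the product formula $\lambda_r(uv)=\sum_{s+t\equiv -r}\lambda_s(u)\lambda_t(v)$ (sum over $s,t\in\{0,1,2\}$ with $s+t\equiv-r\pmod 3$), which shows that $\ker\lambda_0\cap\ker\lambda_1\cap\ker\lambda_2$ is closed under the (non-associative) quandle-ring product --- it is in fact a two-sided ideal --- so $C$ is a subalgebra and $A'=C$. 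In particular $A'$ has codimension $2$ in $\Delta(Core(\mathbb{Z}))$ (for instance $e_0-e_1\notin A'$), which explains why one puts the subquandle $3\mathbb{Z}$ on the left.

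The one genuinely nontrivial step, and hence the main obstacle, is the bilinear identity for the $\lambda_r$ --- equivalently, checking that the three ``partial augmentations'' are compatible with the non-associative multiplication; everything else (the divisibility $3\mid(2j-i)-(2i-j)$, solving for $i$ and $j$, and passing to generators of $\Delta(Core(3\mathbb{Z}))$) is routine. If only the inclusion itself is wanted and not the exact description of $A'$, then the single identity $e_{3k}-e_{3\ell}=[e_{k+2\ell},e_{2k+\ell}]$ already suffices and the product formula can be skipped.
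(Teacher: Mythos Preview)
Your core argument is the same as the paper's: both reduce to the generators $E_a=e_a-e_0$ of the augmentation ideal and realize $E_{3k}$ explicitly as the commutator $[e_k,e_{2k}]$ (your formula $e_{3k}-e_{3\ell}=[e_{k+2\ell},e_{2k+\ell}]$ specializes to this at $\ell=0$). You also correctly flag that the left-hand side must be read as $\Delta(Core(3\mathbb{Z}))$ rather than the full module.

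Where you go beyond the paper is in the second half: you actually determine $A'$. The paper asks, immediately before the proposition, whether the reverse inclusion $\Delta(Core(\mathbb{Z}))\subseteq A'$ holds, and leaves it open; your three partial augmentations $\lambda_r$ and the bilinear identity $\lambda_r(uv)=\sum_{s+t\equiv -r}\lambda_s(u)\lambda_t(v)$ (which follows from $2j-i\equiv -(i+j)\pmod 3$) show that the $\mathbb{Z}$-span $C$ of the commutators is already a two-sided ideal, hence $A'=C=\{\,u:\lambda_0(u)=\lambda_1(u)=\lambda_2(u)=0\,\}$. This answers the paper's question negatively (e.g.\ $E_1\notin A'$) and shows the inclusion in the proposition is strict with codimension~$2$ inside $\Delta(Core(\mathbb{Z}))$. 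That is a genuine and clean addition; the paper's proof contains only the inclusion direction.
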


\begin{proof}
As we know, $\Delta(Core(\mathbbm {Z}))$ is generated by elements $E_a = e_a - e_0$ which can be written as $E_a = [e_{\frac{a}{3}}, e_{\frac{2a}{3}}]$. Hence, if $a$ is divisible by $3$, then this element lies in the commutator subalgebra. 
\end{proof}


\section{Idempotents in Quandle Rings of Dihedral Quandles} \label{Idemp-Dihedral}

Let $X$ be a quandle and $\mathbbm {k}[X]$ be its quandle ring. To find idempotents in $\mathbbm {k}[X]$ we can write 
$$
\mathbbm {k}[X] = \mathbbm {k} e_0 + \Delta_{\mathbbm {k}}(X)
$$
and any element $u$ of this algebra has the form
$$
u = \alpha e_0 + \delta,~~\alpha \in \mathbbm {k},~~e_0 \in X,~~\delta \in \Delta_{\mathbbm {k}}(X).
$$
Hence,
$$
u^2 = (\alpha e_0 + \delta)^2 = \alpha^2  e_0 + \alpha (e_0 \delta +  \delta e_0) + \delta^2.
$$
Since, $\Delta_{\mathbbm {k}}(X)$ is a two-sided ideal, $ \alpha (e_0 \delta +  \delta e_0) \in \Delta_{\mathbbm {k}}(X)$, and $u$ is an idempotent if and only if
$$
 \alpha^2  =  \alpha, ~~\alpha (e_0 \delta +  \delta e_0) + \delta^2 =   \delta.
$$
From the first equality follows that $ \alpha = 1$, or $ \alpha = 0$. In the first case, we have $\varepsilon(u) = 1$ and in the second case $\varepsilon(u) = 0$. Hence, if we want to find idempotent $u$ such that $\varepsilon(u) = 0$ we must find element $u$ which lies in  $\Delta_{\mathbbm {k}}(X)$.

Suppose that $u$ is an idempotent in some groupoid. Then it is easy to see that subgroupoid, which is generated by $u$ contains only one element. In particular,
$$
u^n = (\ldots((uu)u)\ldots u) = u.
$$ 

In $\mathbbm {Z}[R_n]$ put
$$
E_i = e_i - e_0,~~i = 1, 2, \ldots, n-1.
$$
These elements form a basis of $\Delta(R_n)$. The products of these elements are described by the following lemma, whose proof is a straight computation.

\begin{lemma} \label{E}
The following equalities hold
$$
E_i E_j = E_{2j-i} - E_{n-i} - E_{2j},
$$
where all indices are reduced modulo $n$ and we assume $E_0 = 0$. In particular,
$$
E_i^2= E_{i} - E_{n-i} - E_{2i}.
$$
\end{lemma}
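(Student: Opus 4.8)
The plan is to prove the lemma by a direct computation in $\mathbb{Z}[R_n]$, carefully tracking how the basis change from $\{e_i\}$ to $\{E_i\}$ interacts with the quandle product $e_i \cdot e_j = e_{i*j} = e_{2j-i}$ (indices mod $n$). First I would record the inverse change of basis: since $E_i = e_i - e_0$ and $E_0 = 0$, we have $e_i = e_0 + E_i$ for every $i$, including $i=0$. The strategy is then to expand $E_i E_j = (e_i - e_0)(e_j - e_0)$ using bilinearity of the ring multiplication, compute each of the four resulting products $e_i e_j$, $e_i e_0$, $e_0 e_j$, $e_0 e_0$ via the rule $e_a e_b = e_{2b-a}$, and finally re-express every $e_k$ that appears back in terms of $E_k$ so that the answer is stated in the $E$-basis.

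Concretely, the four products are $e_i e_j = e_{2j-i}$, $e_i e_0 = e_{-i} = e_{n-i}$, $e_0 e_j = e_{2j}$, and $e_0 e_0 = e_0$, so that
$$
E_i E_j = e_{2j-i} - e_{n-i} - e_{2j} + e_0.
$$
Now I would substitute $e_k = e_0 + E_k$ for each of the first three terms (the $+e_0$ at the end will cancel): the term $e_{2j-i}$ contributes $e_0 + E_{2j-i}$, the term $-e_{n-i}$ contributes $-e_0 - E_{n-i}$, and the term $-e_{2j}$ contributes $-e_0 - E_{2j}$. Adding the leftover $+e_0$, the four copies of $e_0$ sum to $(1 - 1 - 1 + 1)e_0 = 0$, leaving exactly $E_i E_j = E_{2j-i} - E_{n-i} - E_{2j}$, as claimed. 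The special case $i = j$ gives $E_i^2 = E_i - E_{n-i} - E_{2i}$ immediately by setting $j = i$ (noting $2j - i = i$ and $2j = 2i$).

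There is essentially no obstacle here, only bookkeeping: one must be consistent about reducing indices modulo $n$ and about the convention $E_0 = 0$, which is precisely what makes the $e_0$-terms cancel and also handles edge cases such as $2j - i \equiv 0$, $n - i \equiv 0$ (i.e. $i \equiv 0$), or $2j \equiv 0 \pmod n$ automatically. The mildest point worth a sentence of care is verifying that $e_i e_0 = e_{0 \cdot 2 - i} = e_{-i}$ and that $-i \equiv n - i \pmod n$, so the formula is well-defined independently of the chosen residue representative. Since the paper already notes the proof "is a straight computation," I would present exactly the displayed expansion above and the substitution step, and then remark that the in-particular statement is the diagonal case.
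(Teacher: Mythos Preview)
Your proof is correct and is precisely the ``straight computation'' the paper alludes to: expand $(e_i-e_0)(e_j-e_0)$ bilinearly, use $e_a e_b=e_{2b-a}$, and rewrite in the $E$-basis. There is nothing to add.
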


At first, suppose that $X = R_3 = \{ 0, 1, 2 \}$ is the 3-element  dihedral quandle and $\mathbbm {k} = \Z$. The elements $E_1$ and $E_2$  form a basis of $\Delta(R_3)$.
In the following proposition, we  consider powers of augmented ideals $\Delta = \Delta(R_3)$ and denote $\Delta^{k+1} = \Delta^k \Delta$, $k = 1, 2, \ldots$.

\begin{pro}\label{pro4.8}
For the dihedral quandle $R_3$, the powers of the augmentation ideal are given by:

\[
\Delta(R_3)^{2k+1}
= \langle 3^k E_1,\; 3^k E_2 \rangle,
\quad k \ge 0,
\]
and
\[
\Delta(R_3)^{2\ell}
= \langle 3^{\ell-1}(E_1+E_2),\; 3^\ell E_2 \rangle,
\quad \ell \ge 1.
\]
 


\end{pro}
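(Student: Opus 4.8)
The plan is to bootstrap from the single product rule in Lemma~\ref{E} to a two-step recursion on the powers $\Delta^{n}$ of $\Delta = \Delta(R_3)$. First I would specialize Lemma~\ref{E} to $n = 3$ --- so all indices run modulo $3$, with $E_0 = 0$, $E_3 = E_0$, $E_4 = E_1$ --- to record the full multiplication table on the basis $E_1, E_2$ of $\Delta$:
\[
E_1^2 = E_1 - 2E_2, \qquad E_1 E_2 = E_2 E_1 = -E_1 - E_2, \qquad E_2^2 = -2E_1 + E_2 .
\]
Since the product on $\mathbb{Z}[R_3]$ is $\mathbb{Z}$-bilinear, $\Delta^2 = \Delta \cdot \Delta$ is exactly the $\mathbb{Z}$-span of these three elements; elementary integer row operations on the $3 \times 2$ matrix of their coordinates in the basis $(E_1, E_2)$ collapse this span to $\langle E_1 + E_2,\ 3E_2 \rangle$, which is the asserted formula for $\ell = 1$.

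Next I would compute $\Delta^3 = \Delta^2 \cdot \Delta$ directly from the two generators just found: the four products $(E_1 + E_2)E_1$, $(E_1 + E_2)E_2$, $(3E_2)E_1$, $(3E_2)E_2$ all lie in $\langle 3E_1,\ 3E_2 \rangle$ and together span it, so $\Delta^3 = 3\Delta = \langle 3E_1,\ 3E_2 \rangle$, the formula for $k = 1$. The point of carrying this one extra step by hand is that, beyond Lemma~\ref{E}, it is the only genuine computation; everything afterwards is formal. The engine is the identity $\Delta^{n+2} = 3\,\Delta^{n}$ for all $n \ge 1$, which I would prove by strong induction: the base case $n = 1$ is $\Delta^3 = 3\Delta$ from above, and for $n \ge 2$ the inductive step reads $\Delta^{n+2} = \Delta^{n+1}\cdot\Delta = (3\Delta^{n-1})\cdot\Delta = 3\,(\Delta^{n-1}\cdot\Delta) = 3\,\Delta^{n}$, where the second equality is the inductive hypothesis applied at $n - 1$ and the last two use $\mathbb{Z}$-bilinearity, i.e. $(3a)b = 3(ab)$. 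It is worth stressing that at no stage do we reassociate a triple product: we always strip a single copy of $\Delta$ off the right end, matching the convention $\Delta^{k+1} = \Delta^{k}\Delta$, so the non-associativity of $\mathbb{Z}[R_3]$ never intervenes.

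From the recursion the two stated formulas fall out by iteration. For the odd powers with $k \ge 1$, repeatedly applying $\Delta^{m+2} = 3\Delta^{m}$ down to the base $\Delta^3$ gives $\Delta(R_3)^{2k+1} = 3^{k-1}\Delta^3 = 3^{k}\Delta = \langle 3^{k}E_1,\ 3^{k}E_2 \rangle$, while $k = 0$ is the tautology $\Delta = \langle E_1, E_2 \rangle$. For the even powers with $\ell \ge 1$, iterating down to $\Delta^2$ gives $\Delta(R_3)^{2\ell} = 3^{\ell-1}\Delta^2 = 3^{\ell-1}\langle E_1 + E_2,\ 3E_2 \rangle = \langle 3^{\ell-1}(E_1 + E_2),\ 3^{\ell}E_2 \rangle$. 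As for difficulty: there is no conceptual obstacle, and I expect the whole argument to be routine once Lemma~\ref{E} is in hand. The only places to be careful are the bookkeeping --- getting the index arithmetic modulo $3$ in Lemma~\ref{E} exactly right (a stray sign in the table propagates everywhere), keeping the two base computations of $\Delta^2$ and $\Delta^3$ honest as lattice \emph{equalities} rather than mere inclusions, and never silently reassociating when applying the recursion.
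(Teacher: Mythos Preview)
Your proposal is correct and shares the same core strategy as the paper: compute $\Delta^2$ from Lemma~\ref{E} and then induct. The paper organizes the induction differently, treating odd and even exponents separately and passing from $\Delta^{2k-1}$ to $\Delta^{2k+1}$ (resp.\ $\Delta^{2l-2}$ to $\Delta^{2l}$) by right-multiplying by $\Delta^{2}$ and recomputing the four products at each step, whereas you isolate the single recursion $\Delta^{n+2}=3\Delta^{n}$ once via the explicit computation $\Delta^{3}=3\Delta$ and then iterate. Your packaging is a bit cleaner, and your care about non-associativity is a genuine improvement: the paper's identity $\Delta^{2k+1}=\Delta^{2k-1}\cdot\Delta^{2}$ silently reassociates (it is justified here because $R_3$ is a commutative quandle, so $\mathbb{Z}[R_3]$ is a commutative ring and $A\cdot\Delta^{2}=\Delta^{2}\cdot A$, but the paper does not say this), while your one-$\Delta$-at-a-time stripping matches the convention $\Delta^{k+1}=\Delta^{k}\Delta$ on the nose.
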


\begin{proof}

Since $R_3$ is a commutative quandle, we see that $\Delta^2$ is generated by elements $(E_1)^2$, $E_1 E_2$, $(E_2)^2$. We find that
$$
(E_1)^2 = E_1 - 2 E_2,~~E_1 E_2 = - E_1 -  E_2,~~(E_2)^2 = E_2 - 2 E_1.
$$
It is easy to see that elements $E_1 + E_2$ and $3 E_2$ form a basis of $\Delta^2$.

Hence, we have a basis of induction.  By induction hypothesis we assume that for the odd powers,   $\Delta(R_3)^{2(k-1) + 1}$  has a basis: $\{ 3^{k-1} E_1, 3^{k-1} E_2 \}$. Then $\Delta(R_3)^{2k + 1} = \Delta(R_3)^{2(k-1) + 1} \cdot \Delta^2$ is generated by elements
$$
3^{k-1} E_1 \cdot (E_1 + E_2) = - 3^{k} E_2,~~3^{k-1} E_1 \cdot 3 E_2 = - 3^{k} (E_1 + E_2),
$$
$$
3^{k-1} E_2 \cdot (E_1 + E_2) =  - 3^k E_1,~~
3^{k-1} E_2 \cdot 3 E_2 =  3^k (-2 E_1 + E_2),
$$
and it is easy to check that the set  $\{ 3^{k} E_1, 3^{k} E_2 \}$ form a basis of  $\Delta(R_3)^{2k + 1}$.

 By  induction hypothesis we assume for the even powers,   $\Delta(R_3)^{2(l-1)}$, $l > 1$,  has a basis: $\{ 3^{l-2} (E_1 + E_2), 3^{l-1} E_2 \}$. Then $\Delta(R_3)^{2l} = \Delta(R_3)^{2(l-1)}\cdot \Delta^2$ is generated by the elements
$$
3^{l-2} (E_1 + E_2) \cdot (E_1 + E_2) = - 3^{l-1} (E_1 + E_2),~~3^{l-2} (E_1 + E_2) \cdot 3 E_2 = - 3^{l} E_1,
$$
$$
3^{l-1} E_2  \cdot (E_1 + E_2) =  - 3^l E_1,~~3^{l-1} E_2  \cdot 3 E_2 =  3^l (-2 E_1 + E_2),
$$
and it is easy to check that the set  $\{ 3^{l-1}(E_1 + E_2), 3^{l} E_2 \}$ form a basis of  $\Delta(R_3)^{2l}$.

\end{proof}

We have the following immediate corollary.

\begin{cor}
$\mathbbm {Z}[R_3]$ does not contain non-zero idempotent $u$ such that  $\varepsilon(u) = 0$.
\end{cor}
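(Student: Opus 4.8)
The plan is to combine the general reduction recorded at the start of Section~\ref{Idemp-Dihedral} with the explicit description of the powers of $\Delta(R_3)$ furnished by Proposition~\ref{pro4.8}. Suppose, for contradiction, that $u \in \Z[R_3]$ is a non-zero idempotent with $\varepsilon(u) = 0$. As noted in Section~\ref{Idemp-Dihedral}, the condition $\varepsilon(u)=0$ forces $u \in \Delta(R_3)$, so we may write $u = a E_1 + b E_2$ with $a, b \in \Z$, since $\{E_1, E_2\}$ is a basis of $\Delta(R_3)$.

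Next I would use the observation made just before Lemma~\ref{E}: in any groupoid the subgroupoid generated by an idempotent is a singleton, so every parenthesization of a product of $n$ copies of $u$ equals $u$. In particular the left-nested product $(\dots((uu)u)\dots u)$ of $n$ copies equals $u$, and by the recursive definition $\Delta(R_3)^{n} = \Delta(R_3)^{n-1}\Delta(R_3)$ this product lies in $\Delta(R_3)^{n}$. Hence $u \in \Delta(R_3)^{n}$ for every $n \ge 1$, i.e. $u \in \bigcap_{n \ge 1} \Delta(R_3)^{n}$.

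Finally I would invoke Proposition~\ref{pro4.8}. For each $k \ge 0$ we have $\Delta(R_3)^{2k+1} = \langle 3^k E_1,\, 3^k E_2 \rangle$, so every element of $\Delta(R_3)^{2k+1}$ has both coordinates in the basis $\{E_1, E_2\}$ divisible by $3^k$. Applying this to $u = a E_1 + b E_2$ gives $3^k \mid a$ and $3^k \mid b$ for all $k \ge 0$, which forces $a = b = 0$, so $u = 0$, contradicting $u \ne 0$. Equivalently: Proposition~\ref{pro4.8} shows $\bigcap_{n\ge 1}\Delta(R_3)^{n} = 0$, and any idempotent in the augmentation ideal lies in this intersection.

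I expect essentially no serious obstacle: the substantive computation has already been carried out in Proposition~\ref{pro4.8}. The only point deserving a moment's care is the step ``$u$ idempotent $\Rightarrow u \in \Delta(R_3)^{n}$'', where non-associativity is harmless because $u\cdot u = u$ collapses every parenthesization of a product of copies of $u$ to $u$; this is exactly the remark preceding Lemma~\ref{E}, so it can be cited rather than reproved.
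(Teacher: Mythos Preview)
Your proposal is correct and is precisely the argument the paper has in mind: the corollary is stated immediately after Proposition~\ref{pro4.8} with no separate proof, and your write-up just spells out the intended deduction (an idempotent in $\Delta(R_3)$ lies in every $\Delta(R_3)^n$, and Proposition~\ref{pro4.8} forces $\bigcap_n \Delta(R_3)^n = 0$). The one small simplification you could make is that, once $u\in\Delta^{n-1}$, the single identity $u=u\cdot u\in\Delta^{n-1}\Delta=\Delta^n$ already gives the inductive step, so you do not even need to track left-nested parenthesizations.
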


We have the following immediate corollary.

\begin{pro}\label{pro4.9}
If $\mathbbm {k}$ is a field of $char(\mathbbm {k}) \not=3$,  then the element
$$
u = - \frac{1}{3} \left( E_1 + E_2\right)
$$
is an idempotent of $\mathbbm {k}[R_3]$ such that  $\varepsilon(u) = 0$.
\end{pro}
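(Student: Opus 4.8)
The plan is to verify directly that $u^{2}=u$, using the multiplication table of the basis $\{E_{1},E_{2}\}$ of $\Delta(R_{3})$ already computed in the proof of Proposition~\ref{pro4.8} (equivalently, Lemma~\ref{E} specialized to $n=3$), and to observe that $\varepsilon(u)=0$ is automatic. The only role of the hypothesis $char(\mathbbm{k})\neq 3$ is to make $\frac{1}{3}$ a well-defined element of $\mathbbm{k}$, so that $u$ makes sense and the coefficient manipulations below are legitimate.

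First I would record that $\varepsilon(E_{i})=\varepsilon(e_{i})-\varepsilon(e_{0})=1-1=0$ for $i=1,2$, so in fact $u\in\Delta(R_{3})$ and $\varepsilon(u)=-\frac{1}{3}(\varepsilon(E_{1})+\varepsilon(E_{2}))=0$ with nothing further to check. It then remains to compute $u^{2}$. Recall that $R_{3}$ is a commutative quandle, hence $E_{1}E_{2}=E_{2}E_{1}$, and that
$$
E_{1}^{2}=E_{1}-2E_{2},\qquad E_{1}E_{2}=-E_{1}-E_{2},\qquad E_{2}^{2}=E_{2}-2E_{1}.
$$
Therefore
$$
u^{2}=\frac{1}{9}\left(E_{1}+E_{2}\right)^{2}=\frac{1}{9}\bigl(E_{1}^{2}+2E_{1}E_{2}+E_{2}^{2}\bigr)=\frac{1}{9}\bigl(-3E_{1}-3E_{2}\bigr)=-\frac{1}{3}\left(E_{1}+E_{2}\right)=u,
$$
so $u$ is an idempotent of $\mathbbm{k}[R_{3}]$ with $\varepsilon(u)=0$, as claimed.

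Since the argument reduces to a single substitution, there is no serious obstacle here; the two points that genuinely use the hypotheses are that the cross term $E_{1}E_{2}+E_{2}E_{1}$ collapses to $2E_{1}E_{2}$ (commutativity of $R_{3}$) and that $3$ is invertible in $\mathbbm{k}$ (the condition on the characteristic). As an alternative derivation placing the statement in the framework of Proposition 12.18 of \cite{BES}, one may rewrite $u=e_{0}-\frac{1}{3}w$ with $w=e_{0}+e_{1}+e_{2}$; using that $w\in Z(\mathbbm{k}[R_{3}])$, that $w^{2}=3w$, and that $e_{0}w=we_{0}=w$, a direct expansion of $(e_{0}-\frac{1}{3}w)^{2}$ again yields $e_{0}-\frac{1}{3}w=u$.
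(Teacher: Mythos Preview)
Your proof is correct. The paper does not actually supply a proof of this proposition (it only says ``it is not difficult to prove the following''), and your direct verification using the products $E_1^2$, $E_1E_2$, $E_2^2$ already computed in the proof of Proposition~\ref{pro4.8} is exactly the intended argument; your alternative via $u=e_0-\tfrac{1}{3}w$ is precisely the method the paper later employs for the analogous statement over $C_{2n+1}$.
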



 In Corollary 13.42 of \cite{BES} one can find idempotents in $\mathbbm {k}[R_{2n}]$.
For $\mathbbm {Z} [R_4]$ we have the following

\begin{pro} 

\begin{enumerate}
\item

The quandle ring  $\mathbbm {Z} [R_4]$ 
does not contain idempotents for which the value of the augmentation $\varepsilon$ is equal to $0$.

\item
 The quandle ring  $\mathbbm {Z} [R_4]$ 
contains the idempotents
$$
e_0 - 2 \beta  E_1 +  \beta  E_2 -  2 \beta  E_3,~~
e_0 +  \beta_1  E_1 +  \beta_2  E_2 + (1 - \beta_1)  E_3,\;  \text{where} \;  \beta, \beta_1,  \beta_2 \in \mathbbm {Z},
$$
for which the value of $\varepsilon$ is equal to $1$.
\end{enumerate}
\end{pro}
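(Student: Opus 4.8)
The plan is to turn $u^2=u$ into a small system of equations over $\mathbb{Z}$ in the coordinates of $u$ and solve it. As already recorded at the start of Section~\ref{Idemp-Dihedral}, applying the augmentation to $u^2=u$ gives $\varepsilon(u)^2=\varepsilon(u)$ in $\mathbb{Z}$, hence $\varepsilon(u)\in\{0,1\}$, and the two cases can be treated separately. In both I would first write out explicitly, from Lemma~\ref{E} with $n=4$ and $E_0=0$, the nine products $E_iE_j$ with $i,j\in\{1,2,3\}$, together with the mixed products $e_0E_i$ and $E_ie_0$, which one reads off directly from the values of $0*i$ and $i*0$ in $R_4$. Since $\mathbbm{k}[R_4]$ is non-associative and none of these products is symmetric, the only genuine subtlety is careful bookkeeping when expanding a square; after that everything is linear algebra over $\mathbb{Z}$.

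For part (1), put $u=a_1E_1+a_2E_2+a_3E_3\in\Delta(R_4)$ and expand $u^2$ using the table. Collecting coefficients, the $E_1$- and $E_3$-components of $u^2$ come out as negatives of one another, so comparison with $u$ yields two equations whose sum forces $a_1+a_3=0$; feeding this back collapses the remaining quadratic relations to $a_1=a_3=0$, and then the $E_2$-equation gives $a_2=0$. Thus $0$ is the only idempotent contained in $\Delta(R_4)$, which is precisely assertion (1).

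For part (2), write $u=e_0+\delta$ with $\delta=a_1E_1+a_2E_2+a_3E_3$. Because $\Delta(R_4)$ is a two-sided ideal and $e_0^2=e_0$, the equation $u^2=u$ is equivalent to $e_0\delta+\delta e_0+\delta^2=\delta$. Substituting the mixed products and the expression for $\delta^2$ from part (1) and comparing $E_1,E_2,E_3$-coefficients produces three equations over $\mathbb{Z}$; the $E_1$- and $E_3$-equations coincide and factor, and the $E_2$-equation factors as well. The proof then closes with a short case analysis of these factored equations, in which elementary observations — for instance that an equation of the form $\text{even}=1$ has no integer solution — discard the spurious branches, leaving exactly the integer solutions that make up the two displayed families; one finally substitutes each candidate back into $u^2$ to confirm it is genuinely idempotent with $\varepsilon(u)=1$. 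I expect this Diophantine step to be where care is most needed: over a field the same system acquires additional solutions (compare the idempotents of $\mathbbm{k}[R_{2n}]$ recalled just before the proposition), so the argument must really exploit that the ground ring is $\mathbb{Z}$.
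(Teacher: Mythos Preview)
For part (1) your plan coincides with the paper's: write $u=\alpha E_1+\beta E_2+\gamma E_3$, expand $u^2$ via Lemma~\ref{E}, and read off the system. The paper records exactly the three equations you anticipate and finishes just as you do, adding the $E_1$- and $E_3$-equations to get $\alpha+\gamma=0$, hence $\alpha=\gamma=0$, hence $\beta=0$.

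For part (2) the approaches diverge: the paper does no computation at all but simply cites \cite[Corollary~13.42]{BES}, whereas you propose to derive and solve the system directly. Your method is sound and more self-contained, but your expectation that the case analysis will reproduce the two displayed families is mistaken. Carrying out your plan, the $E_1$- and $E_3$-equations both become $(a_1-a_3)(a_1+a_3-1)=0$ and the $E_2$-equation becomes $(a_1+a_3)\bigl(1-(a_1+a_3)-2a_2\bigr)=0$; over $\mathbb{Z}$ the solutions are $a_1=a_3=0$ with $a_2$ arbitrary, or $a_1+a_3=1$ with $a_2=0$. So the actual idempotents of augmentation $1$ are $e_0+\beta E_2$ and $e_0+\beta_1 E_1+(1-\beta_1)E_3$. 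The first displayed family $e_0-2\beta E_1+\beta E_2-2\beta E_3$ is \emph{not} idempotent for $\beta\neq 0$ (for $\beta=1$ one computes $u^2-u=-12E_2$), and the second fails whenever $\beta_2\neq 0$. Your final ``substitute each candidate back'' step would therefore reject most of the listed elements; the defect is in the proposition's statement (or its transcription from \cite{BES}), not in your strategy.
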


\begin{proof}
For item (1), let
$$
u = \a E_1 + \b E_2 + \g E_3 \in \Delta(R_4).
$$
This element is a non-zero  idempotent if and only if the following system has non-zero integer solutions
$$
\begin{cases}
\a = \a^2 - \g^2,\\
\b = -(\a^2 + \g^2 + 2 (\a \g + \a \b + \b \g)), \\
\g = -\a^2 + \g^2.
\end{cases}
$$
From the first and the third equations follows $\a = -\g = 0$. Then from the second equation  we get $\b =  0$.

Item (2) follows from  \cite[Corollary 13.42]{BES}.
\end{proof}

 Powers of fundamental ideal for the quandle ring $\mathbbm {Z} [R_4]$  were computed in Proposition 12.21 of \cite{BES} which we state here.

\begin{pro}\label{ass-graded-2}
The following statements hold for $R_4$:
\begin{enumerate}
\item $\Delta^2(R_4)$ is generated as an abelian group by $\{E_1 - E_2 - E_3,~ 2 E_2\}$
and $\Delta(R_4) / \Delta^2(R_4) \cong \mathbb{Z} \oplus \mathbb{Z}_2$.
\item If $k > 2$, then $\Delta^k(R_4)$ is generated as an abelian group by $\{2^{k-2}(E_1 - E_2 - E_3), ~2^{k-1} E_2\}$
and $\Delta^{k-1}(R_4) / \Delta^k(R_4) \cong \mathbb{Z}_2 \oplus \mathbb{Z}_2$.
\end{enumerate}
\end{pro}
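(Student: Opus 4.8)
The plan is to work entirely in the free $\mathbb{Z}$-module $\Delta(R_4)=\mathbb{Z}E_1\oplus\mathbb{Z}E_2\oplus\mathbb{Z}E_3$ and to reduce both parts to an explicit multiplication table plus a single induction. First I would specialize Lemma~\ref{E} to $n=4$ (so that $E_0=0$ and all indices are read modulo $4$) and write out the nine products $E_iE_j$ with $1\le i,j\le 3$. Setting $F:=E_1-E_2-E_3$, the table collapses: $E_1E_1=E_1E_3=F$, $E_2E_1=E_2E_3=-2E_2$, $E_3E_1=E_3E_3=-F-2E_2$, and $E_iE_2=0$ for $i=1,2,3$. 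Hence $\Delta^2(R_4)$ is the $\mathbb{Z}$-span of $\{F,\,2E_2\}$; since $F$ has nonzero $E_1$-coordinate while $2E_2$ is a multiple of $E_2$, these two elements are $\mathbb{Z}$-linearly independent and so form a $\mathbb{Z}$-basis of $\Delta^2(R_4)$. For the quotient I would change the basis of $\Delta(R_4)$ from $\{E_1,E_2,E_3\}$ to $\{F,E_2,E_3\}$ (a legitimate change of $\mathbb{Z}$-basis since $E_1=F+E_2+E_3$); in these coordinates $\Delta^2(R_4)=\mathbb{Z}F\oplus\mathbb{Z}(2E_2)$, so $\Delta(R_4)/\Delta^2(R_4)\cong\mathbb{Z}\oplus\mathbb{Z}_2$. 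This settles (1).

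For (2) I would prove by induction on $k\ge 2$ the sharper statement that $\Delta^k(R_4)=\langle 2^{k-2}F,\,2^{k-1}E_2\rangle$, with the two displayed generators forming a $\mathbb{Z}$-basis; the case $k=2$ is part (1). For the inductive step, using the convention $\Delta^{k+1}(R_4)=\Delta^k(R_4)\,\Delta(R_4)$ introduced before Proposition~\ref{pro4.8}, the module $\Delta^{k+1}(R_4)$ is the $\mathbb{Z}$-span of the six products $(2^{k-2}F)E_j$ and $(2^{k-1}E_2)E_j$, $j=1,2,3$. From the table one computes $FE_1=FE_3=2F+4E_2$, $FE_2=0$, $E_2E_1=E_2E_3=-2E_2$, $E_2E_2=0$, so these six products are, up to sign and zero, $2^{k-1}F+2^{k}E_2$ and $2^{k}E_2$; hence $\Delta^{k+1}(R_4)=\langle 2^{k-1}F+2^{k}E_2,\ 2^{k}E_2\rangle=\langle 2^{k-1}F,\ 2^{k}E_2\rangle$, which is exactly the assertion for $k+1$, and independence is again immediate from the coordinates. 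Finally, for $k>2$ the explicit bases show $\Delta^k(R_4)=2\,\Delta^{k-1}(R_4)$ inside the rank-two free abelian group $\Delta^{k-1}(R_4)$, so $\Delta^{k-1}(R_4)/\Delta^k(R_4)\cong\mathbb{Z}_2\oplus\mathbb{Z}_2$.

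I do not anticipate a genuine obstacle here: the whole argument is bookkeeping. The two points that require care are, first, carrying out the modular arithmetic in Lemma~\ref{E} correctly at $n=4$ — in particular noticing the many coincidences that reduce the nine products to essentially two nonzero values, which is what pins the rank at $2$ from $\Delta^2$ onward; and second, keeping in mind that $R_4$ is \emph{not} a commutative quandle, so $E_iE_j\ne E_jE_i$ in general and the one-sided recursion must genuinely be taken with $\Delta^k$ on the \emph{left} (indeed $\Delta(R_4)\,\Delta^2(R_4)=0$, while $\Delta^2(R_4)\,\Delta(R_4)\ne 0$). That one-sided recursion nonetheless suffices for the induction because $\Delta^2(R_4)$ is spanned by elements behaving almost like eigenvectors under right multiplication by $E_1$ and $E_3$ (scaling by $2$ modulo an $E_2$-term) and annihilated by right multiplication by $E_2$, which is precisely the structure that propagates the claimed bases.
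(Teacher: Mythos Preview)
Your proof is correct: the multiplication table derived from Lemma~\ref{E} at $n=4$ is accurate, the basis change $\{E_1,E_2,E_3\}\to\{F,E_2,E_3\}$ is unimodular and yields the quotient immediately, and the induction step is clean (including the side observation that $\Delta(R_4)\,\Delta^2(R_4)=0$, which is a nice sanity check on the one-sided convention). There is no proof in the paper to compare against---the proposition is quoted from \cite{BES} without argument---but your approach is exactly the template the paper uses for the neighbouring case $R_3$ in Proposition~\ref{pro4.8}: tabulate the products $E_iE_j$ via Lemma~\ref{E}, extract a basis of $\Delta^2$, then propagate by right-multiplying by $\Delta$. So what you have written is the natural proof and would slot into the paper seamlessly.
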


Let us consider the quandle ring  $\mathbbm {Z}[R_5]$.
Its augmented ideal $\Delta(R_5)$ is generated by elements 
 $E_1, \ldots, E_4$. Using Lemma \ref{E} we have
$$
E_1^2 = E_1 - E_2 - E_4,~~E_2^2 = E_2 - E_3 - E_4,~~E_3^2 = E_3 - E_2 - E_1,~~E_4^2 = E_4 - E_1 - E_3,
$$
$$
E_1 E_2 =  E_3 - 2 E_4,~~E_2 E_1 =  - E_3 - E_2,~~E_1 E_3 =  - E_4 - E_1,~~E_3 E_1 = E_4 - 2 E_2,~~
$$
$$
E_1 E_4 =  E_2 -  E_4 - E_3,~~E_4 E_1 =  E_3 - E_1 - E_2,~~E_2 E_3 =   E_4 - E_3 - E_1,~~E_3 E_2 = E_1 - E_4 - 2 E_2, 
$$
$$
E_3 E_2 = E_1 - E_4 - E_2,~~E_2 E_4 =  E_1 -  2 E_3,~~E_4 E_2 =   - E_1 - E_4,~~E_3 E_4 =    - E_2 - E_3,~~E_4 E_3 =  E_2 - 2 E_1.
$$
From these computations we obtain the following 
\begin{lemma} \label{sqr}
The four elements
$$
F_1:=E_1 - E_2 - E_4,~~F_2:=E_2 - E_3 - E_4,~~ F_3:=E_3 + 3  E_4\;\;  \mbox{and} \;\; F_4:=5 E_4.
$$
form a basis of $\Delta^2(R_5)$.
\end{lemma}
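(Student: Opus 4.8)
The plan is to treat the statement as a finite computation in the free $\Z$-module $\Delta(R_5)$. Since $\Delta(R_5)$ has basis $E_1,E_2,E_3,E_4$, the square $\Delta^2(R_5)$ is the subgroup of $\Delta(R_5)$ generated by the sixteen products $E_iE_j$, which are tabulated just above (and also given uniformly by Lemma~\ref{E}). So the task reduces to row-reducing, over $\Z$, the $16\times4$ coefficient matrix of those products and recognizing the row space as $\langle F_1,F_2,F_3,F_4\rangle$. I would organize this in three steps.

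First, I would check that $F_1,F_2,F_3,F_4$ are $\Z$-linearly independent: the matrix expressing them in the basis $E_1,E_2,E_3,E_4$ is upper triangular with diagonal entries $1,1,1,5$, hence has determinant $5\neq0$. Consequently it is enough to prove the equality of subgroups $\Delta^2(R_5)=\langle F_1,F_2,F_3,F_4\rangle$, after which $\{F_1,\dots,F_4\}$ is automatically a $\Z$-basis of $\Delta^2(R_5)$; as a byproduct one also obtains $\Delta(R_5)/\Delta^2(R_5)\cong\Z/5\Z$.

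Next, for the inclusion $\langle F_1,\dots,F_4\rangle\subseteq\Delta^2(R_5)$, I would read off from the table the identities $F_1=E_1^2$, $F_2=E_2^2$, $F_3=-E_3^2-E_1^2-2E_2^2$, and $F_4=-E_3^2-E_1E_2-E_1^2-2E_2^2$, which exhibit each $F_i$ as an integer combination of products and hence place all four in $\Delta^2(R_5)$. For the reverse inclusion $\Delta^2(R_5)\subseteq\langle F_1,\dots,F_4\rangle$, I would use $F_1=E_1^2$ and $F_2=E_2^2$ to pass to the quotient $\overline{\Delta}:=\Delta(R_5)/\langle F_1,F_2\rangle$; since $F_1,F_2$ have coefficient $1$ on $E_1$ and $E_2$ respectively, $\overline{\Delta}$ is free of rank two on $\overline{E_3},\overline{E_4}$, with $\overline{E_1}=\overline{E_3}+2\overline{E_4}$ and $\overline{E_2}=\overline{E_3}+\overline{E_4}$, and with $\overline{F_3}=\overline{E_3}+3\overline{E_4}$, $\overline{F_4}=5\overline{E_4}$. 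It then remains to reduce each of the sixteen products $E_iE_j$ modulo $\langle F_1,F_2\rangle$ and observe that the reduced vectors — which turn out to be just $0$, $\pm(\overline{E_3}-2\overline{E_4})$, $-(\overline{E_3}+3\overline{E_4})$, and $-(2\overline{E_3}+\overline{E_4})$ — all lie in, and together generate, the index-$5$ sublattice $\langle\overline{E_3}+3\overline{E_4},\,5\overline{E_4}\rangle=\langle\overline{F_3},\overline{F_4}\rangle$. Combined with $F_1,F_2\in\Delta^2(R_5)$, this gives $\Delta^2(R_5)=\langle F_1,F_2,F_3,F_4\rangle$.

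I do not expect a genuine obstacle here: the argument is pure bookkeeping with no conceptual content. The only points requiring care are organizing the sixteen-product reduction so that it collapses to the handful of vectors above rather than sixteen unrelated rows — which is precisely what passing to $\overline{\Delta}$ achieves — and using the multiplication table consistently (the line for $E_3E_2$ is printed twice above with different coefficients; the value forced by Lemma~\ref{E} is $E_3E_2=E_1-E_2-E_4$, which is in fact exactly $F_1$ and so causes no trouble).
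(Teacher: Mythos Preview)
Your proposal is correct and is essentially what the paper does, only spelled out in full detail: the paper just lists the sixteen products $E_iE_j$ and then says ``From these computations we obtain the following'' before stating the lemma, so its implicit argument is exactly the row-reduction you carry out. Your device of quotienting by $\langle F_1,F_2\rangle$ to collapse the sixteen reductions into four distinct residues is a clean organizational improvement, and you are right about the typo --- Lemma~\ref{E} gives $E_3E_2=E_1-E_2-E_4=F_1$, so both printed values are wrong but the correct one still lies in $\langle F_1,\dots,F_4\rangle$.
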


 \begin{conj}
For any $a \in \Delta^{2n+1}(R_{2n+1})$, its image under the augmentation map (in $\Delta^{2n+1}(R_{2n+1})$) has the value $\varepsilon(a) \in (2n+1) \mathbb{Z}.$ 
\end{conj}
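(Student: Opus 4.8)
The plan is to reduce the claim to the classical description of powers of the augmentation ideal in the integral group ring $\mathbb{Z}[\mathbb{Z}_{2n+1}]$, after establishing that the $\Delta$-filtration of the quandle ring $\mathbb{Z}[R_{2n+1}]$ coincides with the $\Delta$-filtration of $\mathbb{Z}[\mathbb{Z}_{2n+1}]$. Throughout write $N=2n+1$, which is odd.

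\textbf{Step 1 (comparison with a group ring).} The dihedral quandle $R_N$ is the Alexander quandle of $\mathbb{Z}_N$ with $t=-1$, so on basis elements the quandle product is $e_a\cdot_q e_b=e_{-a+2b}$, whereas the group-ring product on the same free $\mathbb{Z}$-module $\mathbb{Z}[\mathbb{Z}_N]$ is $e_a\cdot_g e_b=e_{a+b}$. Since $N$ is odd, $a\mapsto -a$ and $b\mapsto 2b$ are automorphisms of $\mathbb{Z}_N$; let $\phi$ and $\psi$ be the $\mathbb{Z}$-algebra automorphisms of $\mathbb{Z}[\mathbb{Z}_N]$ they induce. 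Then $e_a\cdot_q e_b=\phi(e_a)\cdot_g\psi(e_b)$, and by bilinearity $u\cdot_q v=\phi(u)\cdot_g\psi(v)$ for all $u,v$. Both $\phi$ and $\psi$ satisfy $\varepsilon\circ\phi=\varepsilon=\varepsilon\circ\psi$, hence preserve $\Delta$ (as a submodule this does not depend on which product we use) and all group-ring powers $\Delta^k$. Now prove $\Delta(R_N)^k=\Delta(\mathbb{Z}[\mathbb{Z}_N])^k$ as submodules by induction on $k\ge 1$, the case $k=1$ being the tautology $\Delta(R_N)=\Delta(\mathbb{Z}[\mathbb{Z}_N])$: granting the equality for $k$, the quandle-ring power $\Delta(R_N)^{k+1}=\Delta(R_N)^k\cdot_q\Delta$ is the submodule spanned by the $\phi(u)\cdot_g\psi(v)$ with $u\in\Delta(\mathbb{Z}[\mathbb{Z}_N])^k$, $v\in\Delta$; since $\phi$ fixes $\Delta(\mathbb{Z}[\mathbb{Z}_N])^k$ and $\psi$ fixes $\Delta$, this span is exactly the group-ring power $\Delta(\mathbb{Z}[\mathbb{Z}_N])^{k+1}$.

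\textbf{Step 2 (the cyclic group ring and conclusion).} Identify $\mathbb{Z}[\mathbb{Z}_N]=\mathbb{Z}[x]/(x^N-1)$, so $\Delta=(x-1)$ and $\Delta^N=\bigl((x-1)^N\bigr)$. Expanding and using $N$ odd, $(x-1)^N=(x^N-1)+\sum_{0<j<N}\binom{N}{j}(-1)^{N-j}x^j$. When $N$ is prime, $N\mid\binom{N}{j}$ for $0<j<N$, so the last sum is $N\,h(x)$ with $h\in\mathbb{Z}[x]$; moreover $\sum_{j=0}^N\binom{N}{j}(-1)^{N-j}=(1-1)^N=0$ forces $h(1)=0$. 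Reducing modulo $x^N-1$ gives $(x-1)^N=N\,\overline h$ in $\mathbb{Z}[\mathbb{Z}_N]$ with $\overline h\in\Delta$, hence $\Delta^N=(N\overline h)=N(\overline h)\subseteq N\,\Delta$. Transporting this back through Step 1 yields $\Delta^{2n+1}(R_{2n+1})\subseteq(2n+1)\,\Delta(R_{2n+1})$; thus every $a\in\Delta^{2n+1}(R_{2n+1})$ equals $(2n+1)b$ with $b\in\Delta(R_{2n+1})$, and $\varepsilon(a)=(2n+1)\varepsilon(b)\in(2n+1)\mathbb{Z}$. (The literal bound $\varepsilon(a)\in(2n+1)\mathbb{Z}$ is automatic because $\varepsilon$ vanishes on $\Delta$; the substantive output of the argument is the divisibility $\Delta^{2n+1}(R_{2n+1})\subseteq(2n+1)\,\mathbb{Z}[R_{2n+1}]$, together with the full structure of $\Delta^k(R_{2n+1})$ inherited from $\mathbb{Z}[\mathbb{Z}_{2n+1}]$.)

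\textbf{The main obstacle.} The divisibility step genuinely needs $2n+1$ to be prime: for composite $N$ the coefficients $\binom{N}{j}$ need not be divisible by $N$ (the relevant gcd is $p$ when $N=p^k$ and $1$ otherwise), and for instance $\Delta^{9}(R_9)$ is not contained in $9\,\mathbb{Z}[R_9]$, so in full generality the statement should be read with $2n+1$ prime, or with $\gcd_{0<j<2n+1}\binom{2n+1}{j}$ in place of $2n+1$. If one wishes to avoid the group-ring reduction, the same divisibility for prime $N$ can be obtained by generalizing Proposition~\ref{pro4.8}: using Lemma~\ref{E} and the recursion $\Delta^{k+1}=\Delta^k\cdot\Delta$, prove by induction that $\Delta^k(R_N)$ has a $\mathbb{Z}$-basis of the shape $\{N E'_1,\dots,N E'_{k-1},E'_k,\dots,E'_{N-1}\}$ for a suitable unitriangular basis $\{E'_i\}$ of $\Delta(R_N)$, whereupon $k=N$ gives $\Delta^N(R_N)=N\,\Delta(R_N)$. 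Tracking the structure constants of Lemma~\ref{E} through that induction is the laborious part, and is precisely what Steps 1–2 circumvent.
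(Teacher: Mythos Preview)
The statement you are addressing is labeled a \emph{Conjecture} in the paper, and the paper offers no proof; it merely records it after the computation of $\Delta^2(R_5)$. So there is no argument of the authors' to compare against, and your proposal should be read as a \emph{resolution} of the conjecture rather than a re-proof.

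On that reading your argument is sound and in fact yields substantially more than what is asked. Step~1 is the key observation and is correct: since $u\cdot_q v=\phi(u)\cdot_g\psi(v)$ with $\phi,\psi$ ring automorphisms of $\mathbb{Z}[\mathbb{Z}_N]$ fixing $\Delta$, the left-normed powers $\Delta^{k+1}=\Delta^k\cdot\Delta$ (which is exactly the convention the paper adopts before Proposition~\ref{pro4.8}) coincide with the group-ring powers for all $k$. This single remark subsumes and explains the ad hoc bases the paper produces for $R_3$ and $R_5$, and it immediately imports the entire classical structure theory of $\Delta^k(\mathbb{Z}[\mathbb{Z}_N])$. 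Step~2 then gives $\Delta^{N}(R_N)\subseteq N\Delta(R_N)$ for $N=2n+1$ prime, which is the substantive content one would want.

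Two comments. First, you are right that the conjecture as literally phrased is vacuous: $\varepsilon$ annihilates $\Delta$, hence $\varepsilon(a)=0$ for every $a\in\Delta^{2n+1}(R_{2n+1})$ and the conclusion $\varepsilon(a)\in(2n+1)\mathbb{Z}$ holds for all $n$ with no work. What the authors presumably intend---and what you actually prove---is the divisibility $\Delta^{2n+1}(R_{2n+1})\subseteq(2n+1)\,\mathbb{Z}[R_{2n+1}]$. It would be worth stating this explicitly as the corrected conjecture. Second, your counterexample for $N=9$ is correct and shows the primality hypothesis is not removable for the strong form; your suggested replacement of $2n+1$ by $\gcd_{0<j<2n+1}\binom{2n+1}{j}$ is the right fix and follows from the same argument.

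In short: there is no paper proof; your approach is correct, more conceptual than the paper's case-by-case computations of $\Delta^k$, and it both proves the (intended) conjecture for prime $2n+1$ and delimits its scope for composite $2n+1$.
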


\begin{pro}
The quandle ring  $\mathbbm {Z}[R_5]$ has a non-trivial idempotent for which the value $\varepsilon$ is equal to $1$ if and only if 
the following system has non-zero integer  solutions in which more than one component is non-zero
$$
\begin{cases}
\alpha_1 = \alpha_3 + \alpha_4 +   \alpha_1^2 - \alpha_3^2 - \alpha_4^2 - \alpha_1\alpha_3  - \alpha_1 \alpha_4  - 2 \alpha_3 \alpha_4,    \\ 
\alpha_2 = \alpha_1 + \alpha_3   - \alpha_1^2 + \alpha_2^2 - \alpha_3^2 - \alpha_1\alpha_2  - 2 \alpha_1 \alpha_3  -  \alpha_2 \alpha_3,    \\ 
\alpha_3 = \alpha_2 + \alpha_4   -  \alpha_2^2 + \alpha_3^2 - \alpha_4^2 - \alpha_2\alpha_3  - 2 \alpha_2 \alpha_4  -  \alpha_3 \alpha_4,    \\ 
\alpha_4 = \alpha_1 + \alpha_2 - \alpha_1^2 - \alpha_2^2 + \alpha_4^2 - 2 \alpha_1\alpha_2  -  \alpha_1 \alpha_4  -  \alpha_2 \alpha_4.
\end{cases}
$$
\end{pro}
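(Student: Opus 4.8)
The plan is to repeat the reduction used at the start of Section~\ref{Idemp-Dihedral}. Every $u\in\mathbbm{Z}[R_5]$ with $\varepsilon(u)=1$ can be written uniquely as $u=e_0+\delta$ with $\delta=\alpha_1E_1+\alpha_2E_2+\alpha_3E_3+\alpha_4E_4\in\Delta(R_5)$, since $\{e_0,E_1,E_2,E_3,E_4\}$ is a $\mathbbm{Z}$-basis of $\mathbbm{Z}[R_5]$. Because $\Delta(R_5)$ is a two-sided ideal, $u^2=e_0+\big(e_0\delta+\delta e_0+\delta^2\big)$ with the bracketed term in $\Delta(R_5)$, so $u$ is idempotent if and only if
$$
e_0\,\delta+\delta\, e_0+\delta^2=\delta .
$$
Thus everything reduces to expanding the left-hand side in the basis $\{E_i\}$ and comparing coefficients.

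For the linear part I would use the dihedral rule $x*y=-x+2y$: since $0*i=2i$ and $i*0=-i$ in $R_5$, one has $e_0E_i=E_{2i}$ and $E_ie_0=E_{-i}$ (indices mod $5$, with $E_0=0$), so that $e_0\delta+\delta e_0$ has $E_1$-coefficient $\alpha_3+\alpha_4$, $E_2$-coefficient $\alpha_1+\alpha_3$, $E_3$-coefficient $\alpha_2+\alpha_4$, and $E_4$-coefficient $\alpha_1+\alpha_2$ — exactly the linear terms of the four displayed equations. For the quadratic part $\delta^2=\sum_{i,j}\alpha_i\alpha_jE_iE_j$ I would substitute the sixteen products $E_iE_j$ recorded above (equivalently, Lemma~\ref{E}), collecting the mixed terms as $\alpha_i\alpha_j(E_iE_j+E_jE_i)$ for $i<j$ and reading off the coefficient of each $E_k$; for instance the $E_1$-coefficient of $\delta^2$ works out to $\alpha_1^2-\alpha_3^2-\alpha_4^2-\alpha_1\alpha_3-\alpha_1\alpha_4-2\alpha_3\alpha_4$, and the other three analogously. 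Matching $e_0\delta+\delta e_0+\delta^2=\delta$ coefficient by coefficient then produces precisely the stated system.

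It remains to interpret the word ``non-trivial''. The trivial idempotents of $\mathbbm{Z}[R_5]$ with $\varepsilon=1$ are exactly $e_0,e_1,\dots,e_4$, and in the present coordinates $e_0$ is the zero solution $(0,0,0,0)$ while $e_i$ is the solution with $\alpha_i=1$ and all other coordinates zero. Conversely, if a solution of the system has at most one non-zero coordinate, say only $\alpha_1\neq 0$, then the first equation collapses to $\alpha_1=\alpha_1^2$, forcing $\alpha_1=1$ and hence $u=e_0+E_1=e_1$; and the all-zero solution gives $u=e_0$. Therefore the non-trivial idempotents with $\varepsilon=1$ correspond bijectively to the integer solutions of the system having at least two non-zero coordinates, which is the assertion.

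The only real obstacle is the bookkeeping in computing the coefficients of $\delta^2$: it is finite but a little long, and since $R_5$ is not commutative one must keep $E_iE_j$ and $E_jE_i$ distinct throughout. No idea beyond the product table is needed.
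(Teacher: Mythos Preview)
Your approach is essentially the same as the paper's: write $u=e_0+\delta$, expand $u^2=e_0+(e_0\delta+\delta e_0)+\delta^2$, compute the linear part and $\delta^2$ using the product table (Lemma~\ref{E}), and read off the system. Your extra paragraph justifying the equivalence between ``non-trivial'' and ``more than one non-zero coordinate'' is a detail the paper leaves implicit, and your argument for it is correct.
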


\begin{proof}
Any element $u$ in $\mathbbm {Z}[R_5]$ for which $\varepsilon(u) = 1$ has a form 
$$
u = e_0 + \delta = e_0 + \alpha_1 E_1 + \alpha_2 E_2 + \alpha_3 E_3 + \alpha_4 E_4
$$
for some integers $\alpha_i$. Let us find $u^2$. We have $u^2 = e_0 + (\delta e_0 + e_0 \delta) + \delta^2$. Using straightforward computations we get
$$
\delta e_0 + e_0 \delta = (\alpha_3  + \alpha_4)  E_1 +  (\alpha_1  + \alpha_3)  E_2 +  (\alpha_2  + \alpha_4)  E_3 +  (\alpha_1  + \alpha_2)  E_4.
$$
Further,
$$
\delta^2 = (\alpha_1^2 - \alpha_3^2 - \alpha_4^2 - \alpha_1\alpha_3  - \alpha_1 \alpha_4  - 2 \alpha_3 \alpha_4) E_1 +
 (- \alpha_1^2 + \alpha_2^2 - \alpha_3^2 - \alpha_1\alpha_2  - 2 \alpha_1 \alpha_3  -  \alpha_2 \alpha_3) E_2 +
$$
$$ 
+ (- \alpha_2^2 + \alpha_3^2 - \alpha_4^2 - \alpha_2\alpha_3  - 2 \alpha_2 \alpha_4  -  \alpha_3 \alpha_4) E_3 +
  (- \alpha_1^2 - \alpha_2^2 + \alpha_4^2 - 2 \alpha_1\alpha_2  -  \alpha_1 \alpha_4  -  \alpha_2 \alpha_4) E_4.
$$
Hence, from the equation $u^2 = u$, we obtain the system of equations.
\end{proof}

\section{Commutative Quandles, Powers of Augmentation Ideals and Idempotents} \label{PowerIdealCommutative}

A quandle $(X, *)$ is said to be commutative, if $x * y = y * x$ for all $x, y \in X$. It is easy to see that any commutative quandle is a latin quandle.    Notice that this implies also that commutative quandles are faithful meaning that the map $x \mapsto R_x$ is injective.  This is clear since for any $z$, $R_x(z)=R_y(z)$ implies $x*z=y*z$ and thus $x=y$.

\begin{exs}
\begin{enumerate}

\item
 On the set $\mathbb{Z}_{2n+1}$ define the following operation,
$$
x * y = (n+1) (x + y),~~x, y \in \mathbb{Z}_{2n+1}.
$$
Then $(\mathbb{Z}_{2n+1}, *)$ is a commutative quandle. We will denote it by $C_{2n+1}$. If $n=1$, then $C_3 $ is the dihedral quandle $ R_3$ since $2y-x=2(x+y)$ mod $3$.

\item
The set of  dyadic rationals,
$$
\left\{ \frac{m}{2^k} ~|~m \in \mathbb{Z},  k \in \{ 0, 1, 2, \ldots \}   \right\}
$$
with the operation $a * b = \frac{1}{2} (a + b)$ is a commutative quandle.
 

\item 

As remarked by Luc Ta \cite{Ta2026Medial}, a conjugation quandle  $Conj(G)$ is commutative if and only if the group $G$ is trivial,  and a core quandle $Core(G)$ is commutative if and only if $G$ has exponent 3.

 \end{enumerate}

\end{exs}

\noindent
From these examples we see that there are many commutative quandles. We can formulate a question on the classification of finite commutative quandles. The first step towards an answer to this question is the following

\begin{pro}
If  $(X,*)$ is a finite commutative quandle then it contains an odd number of elements.
\end{pro}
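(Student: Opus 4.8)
The plan is to fix a single element $z \in X$ and extract from it an involution $\rho_z$ of $X$ whose fixed‑point set is exactly $\{z\}$; a parity count for involutions then immediately forces $|X|$ to be odd. (Here I use the standing convention that a quandle is nonempty, so that a choice of $z$ is possible.) For $x \in X$, let $\rho_z(x)$ denote the unique $y \in X$ with $x*y = z$. Such a $y$ exists and is unique because, for fixed $x$, the left multiplication $L_x \colon y \mapsto x*y$ is injective — if $x*y_1 = x*y_2$ then $y_1*x = y_2*x$ by commutativity, whence $y_1 = y_2$ since $R_x$ is a bijection — and an injective self‑map of a finite set is a bijection; so $\rho_z(x) = L_x^{-1}(z)$ is a well‑defined map $X \to X$.

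Next I will check the two properties that make the argument work. That $\rho_z$ is a bijection: if $\rho_z(x_1) = \rho_z(x_2) = y$ then $x_1*y = z = x_2*y$, so $x_1 = x_2$ because $R_y$ is injective; and given $y$, the element $x := R_y^{-1}(z)$ satisfies $x*y = z$, i.e.\ $\rho_z(x) = y$. That $\rho_z$ is an involution: by commutativity, $\rho_z(x) = y \iff x*y = z \iff y*x = z \iff \rho_z(y) = x$, so $\rho_z \circ \rho_z = \Id$. Finally, the fixed points: $\rho_z(x) = x$ means $x*x = z$, and the idempotency axiom $x*x = x$ then forces $x = z$; thus $\rho_z$ fixes $z$ and nothing else.

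To conclude, write the involution $\rho_z$ as a product of disjoint transpositions together with its fixed points; then $|X|$ equals the number of fixed points plus twice the number of transpositions, i.e.\ $|X| = 1 + 2m$ for some $m \ge 0$, which is odd. I do not expect a genuine obstacle here: the whole proof rests on spotting the permutation $\rho_z$, after which the only points needing (routine) care are the well‑definedness and bijectivity of $\rho_z$ — which is precisely where the latin/finiteness hypothesis enters — and the observation that commutativity is exactly what promotes $\rho_z$ to an involution.
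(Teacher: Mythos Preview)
Your proof is correct and follows essentially the same idea as the paper's: both arguments pair off the elements of $X\setminus\{z\}$ via the relation $x*y=z$ (equivalently $y*x=z$), leaving $z$ as the sole unpaired element. Your packaging of this pairing as an explicit involution $\rho_z$ with a unique fixed point is a cleaner and more transparent version of the paper's inductive construction of disjoint pairs $(y_i,z_i)$.
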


\begin{proof}
Let's assume that the cardinality of $X$ is greater than $2$.  Now fix an element  $x \in X$.  Given an element $y \neq x$, by the second quandle axiom, there exists a unique $z \in X$ such that $z*y=x$.  Now since $x \neq y$ and $X$ is a commutative (then latin) quandle, the elements $x,y,z$ are pairwise disjoint.   Now assume we have a family of elements $x, y_1, z_1, \ldots y_n, z_n$ such that for all $i$ with $1\leq i \leq n$, $z_i*y_i=x$.  Now let $y_{n+1} \not\in \{ x, y_1, z_1, \ldots y_n, z_n \} $.  There exists a unique $z_{n+1} \in X$ such that $z_{n+1} *y_{n+1} =x$.  Now we \emph{claim} that  $z_{n+1} \not\in \{ x, y_1, z_1, \ldots y_n, z_n, y_{n+1} \}.$ It is clear that $z_{n+1} \neq y_{n+1}$, otherwise $y_{n+1} =x$ and thus a contradiction.  Now if $z_{n+1} =z_i$ for $1 \leq i \leq n$ then $y_{n+1} =y_i$, thus a contradiction.  Lastly, if $z_{n+1} =y_i$ for $1 \leq i \leq n$ then we get $y_i*y_{n+1}=y_i*z_i=x$ giving that $z_i=y_{n+1}$, and thus a contradiction.   Then  the cardinality of $X$ is an odd number.
\end{proof}

From straight calculations, we have the following
\begin{lemma}
In a commutative quandle $(X, *)$ for any $x, y, z \in X$,  it holds that,
\begin{enumerate}
\item

 $(x * y) * z = (y * x) * z = z * (y * x) = z * (x * y)$, in particular, $(x * y) * x = x * (x * y) = x * (y * x)$; 
 
 \item

 $R_x^{-1} R_y R_x = R_y^{-1} R_x R_y$, where we consider the actions from the left to the right, i.e. 
$(a) (R_x^{-1} R_y R_x) = (((a) R_x^{-1}) R_y)R_x$ for any $a \in X$.

\item
 $R_x^{-1} R_y = [R_y, R_x^{-1}]$, where $[a, b] = a^{-1} b^{-1} a b$ is the group commutator. 
 \end{enumerate}
\end{lemma}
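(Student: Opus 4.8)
The plan is to handle the three items in order, letting each rest on the previous one so that only commutativity of $*$ together with the quandle axioms does any work. For item~(1) I would argue purely formally from the hypothesis $x*y=y*x$: applied to the pair $\{x,y\}$ it gives $(x*y)*z=(y*x)*z$ and $z*(x*y)=z*(y*x)$, while applied to the pair $\{x*y,\,z\}$ it gives $(x*y)*z=z*(x*y)$; splicing these three equalities together produces the asserted chain of four, and the ``in particular'' clause is the special case $z=x$.

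For item~(2) I would evaluate the two automorphisms on an arbitrary $a\in X$ and compare the outputs using axiom~(3), keeping the left-to-right convention $(a)(fg)=((a)f)g$. Setting $b=(a)R_x^{-1}$, so that $b*x=a$, the left-hand composite sends $a$ to $(b*y)*x=(b*x)*(y*x)=a*(y*x)$ by axiom~(3). Setting $c=(a)R_y^{-1}$, so that $c*y=a$, the right-hand composite sends $a$ to $(c*x)*y=(c*y)*(x*y)=a*(x*y)$. Since $y*x=x*y$, the two images coincide for every $a$, hence the two automorphisms are equal.

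For item~(3) I would just rewrite the group commutator: in the group generated by the right multiplications one has $[R_y,R_x^{-1}]=R_y^{-1}(R_x^{-1})^{-1}R_yR_x^{-1}=R_y^{-1}R_xR_yR_x^{-1}$, and multiplying the claimed identity $R_x^{-1}R_y=R_y^{-1}R_xR_yR_x^{-1}$ on the right by $R_x$ turns it into exactly $R_x^{-1}R_yR_x=R_y^{-1}R_xR_y$, which is item~(2). None of the steps is difficult; the only thing that needs genuine care --- and hence the main obstacle --- is bookkeeping with the right-action convention, namely the composition order in $\Inn(X)$, the reading of $R_x^{-1}$ as the set-theoretic inverse of the bijection $R_x$, and the placement of inverses in $[a,b]=a^{-1}b^{-1}ab$, all of which must be used consistently, after which each displayed equality is a one-line substitution.
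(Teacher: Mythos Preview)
Your proof is correct and matches the paper's approach: the paper offers no proof beyond the phrase ``from straight calculations,'' and your argument supplies exactly those calculations --- commutativity substitutions for item~(1), evaluating both sides on an arbitrary $a$ via the self-distributive axiom for item~(2), and a group-word rearrangement reducing item~(3) to item~(2). There is nothing to add.
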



The following properties of commutative quandles are evident.

\begin{pro}
\begin{enumerate}

\item

Let \(2n+1\) be an odd integer, where \(n>0\). Let
\(\varphi \colon \mathbb{N} \to \mathbb{N}\) denote Euler's 
function (i.e. $\varphi(m)$ is the number of units in $\mathbb{Z}_m$). Then the commutative quandle \((C_{2n+1}, *)\) is a
\(\varphi(2n+1)\)-quandle; that is,
$
R_y^{\varphi(2n+1)}=\operatorname{id}_{C_{2n+1}}
$
for every \(y\in C_{2n+1}\). Equivalently,
$
x *^{\varphi(2n+1)} y = x
$
for all \(x,y\in C_{2n+1}\).

\item
 In the quandle $(C_{2n+1}, *),$ the inverse operation is defined by the rule $x \bar{*} y = 2 x - y$. 
 
\item
 Direct sum of commutative quandles is commutative.
\end{enumerate}

\end{pro}

\begin{proof}

Item (1) follows from the following.
First, a direct computation yields the equation
\[
R^{\varphi(2n+1)}_y(x) = (n+1)^{\varphi(2n+1)} x + [\sum_{k=1}^{\varphi(2n+1)}(n+1)^k]y.
\]

Now, since $n+1$ is invertible in $C_{2n+1}$, by Euler theorem we get that $(n+1)^{\varphi(2n+1)}=1.$   Now we claim that the sum $$ \sum_{k=1}^{\varphi(2n+1)}(n+1)^k=0$$ and thus item (1) will hold.  The following is a short proof that the sum is zero.

Let $
m=2n+1$, we have $n+1= 2^{-1} \in C_m$, thus the sum
\[
S=\sum_{k=1}^{\varphi(m)}(n+1)^k=\sum_{k=1}^{\varphi(m)}(2^{-1})^k.
\]

By Euler's theorem, $
2^{\varphi(m)}=1$,
and hence
$
(2^{-1})^{\varphi(m)} = 1.$ Furthermore, $
2^{-1}-1=n.$
Since $\gcd(n,m)=1,$
the element $2^{-1}-1$ is invertible in $\mathbb{Z}_m$. Therefore, the geometric series formula yields
\[
S
=(2^{-1})\,
\frac{(2^{-1})^{\varphi(m)}-1}{2^{-1}-1}.
\]

That is,
\[
\sum_{k=1}^{\varphi(2n+1)}(n+1)^k = 0.
\]


 Items (2) and (3) are straightforward computations.
\end{proof}

\medskip

\begin{qst}
When $2n+1$ is a prime number, it is interesting to find idempotents of the quandle ring of the commutative quandle $C_{2n+1}$. 
\end{qst}

 Denote by $\{ a_0, a_1, \ldots, a_{2n} \}$ the basis of the quandle ring $\Z[C_{2n+1}]$. Denote $f_i = a_i - a_0$, $i = 1, 2, \ldots , 2n$. These elements form a basis of 
 $ \Delta(C_{2n+1})$.

 The following proposition is analogous to Proposition~\ref{pro4.9}. 

\begin{pro}
If $\mathbbm {k}$ is a field of $char(\mathbbm {k}) \not=2n+1$, then in the quandle ring $\mathbbm {k}[C_{2n+1}]$, the element
$$
u = - \frac{1}{2n+1} \left( f_1 + \cdots + f_{2n}\right)
$$
is idempotent.
\end{pro}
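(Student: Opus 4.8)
The plan is to reduce the claim to the facts, recalled above from \cite[Proposition 12.18]{BES}, that the ``total'' element $w := a_0 + a_1 + \cdots + a_{2n}$ lies in the center of $\mathbbm{k}[C_{2n+1}]$ and that $\frac{1}{2n+1}w$ is an idempotent (the latter using $\operatorname{char}(\mathbbm{k}) \neq 2n+1$, so that $2n+1$ is invertible in $\mathbbm{k}$). First I would rewrite $u$ in terms of $w$: since $f_i = a_i - a_0$,
$$
f_1 + \cdots + f_{2n} = (a_1 + \cdots + a_{2n}) - 2n\,a_0 = w - (2n+1)\,a_0 ,
$$
hence, setting $e := \frac{1}{2n+1}w$,
$$
u = -\tfrac{1}{2n+1}\bigl(w - (2n+1)\,a_0\bigr) = a_0 - e .
$$
So the statement becomes the identity $(a_0 - e)^2 = a_0 - e$.

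Expanding, and using the quandle axiom $a_0 * a_0 = a_0$ (so $a_0^2 = a_0$ in the ring), the idempotency $e^2 = e$, and the centrality of $e$ (so $a_0 e = e a_0$), one gets
$$
(a_0 - e)^2 = a_0^2 - a_0 e - e a_0 + e^2 = a_0 - 2\,a_0 e + e .
$$
Thus it suffices to prove the single identity $a_0 e = e$, equivalently $a_0 w = w$. This is where the latin property of $C_{2n+1}$ enters: for fixed $i$ the left translation $j \mapsto i * j = (n+1)(i+j)$ is an affine map of $\mathbb{Z}_{2n+1}$ whose linear part is multiplication by the unit $n+1 = 2^{-1}$, hence a bijection, so
$$
a_i\,w = \sum_{j \in \mathbb{Z}_{2n+1}} a_{i * j} = \sum_{k \in \mathbb{Z}_{2n+1}} a_k = w .
$$
Taking $i = 0$ gives $a_0 w = w$, whence $a_0 e = e$, and therefore $(a_0 - e)^2 = a_0 - 2e + e = a_0 - e = u$, as desired.

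I do not anticipate a serious obstacle here; the only points needing care are correctly invoking \cite[Proposition 12.18]{BES} for the centrality and idempotency facts, and checking that the left translations of $C_{2n+1}$ are genuinely bijective so that $a_0 w = w$ — both are immediate. As an aside, one can make the argument self-contained: the same summation-over-a-bijection computation gives $w^2 = \sum_{i,j} a_{i*j} = (2n+1)w$, which (once $2n+1$ is invertible) re-derives $e^2 = e$, while $a_i w = w = w a_i$ for all $i$ re-derives the centrality of $w$; I would include this remark so the proposition does not strictly depend on \cite{BES}.
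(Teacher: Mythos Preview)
Your argument is correct and follows essentially the same route as the paper's proof: both rewrite $u = a_0 - \tfrac{1}{2n+1}w$ and then use that $w$ is invariant under left and right multiplication by basis elements (a consequence of $C_{2n+1}$ being latin) to conclude $u^2 = u$. The paper's version is terser, appealing directly to $a_i w = w = w a_i$ rather than separately invoking centrality from \cite{BES} and then the left-translation bijection, but your expanded computation and the self-contained remark at the end amount to the same thing.
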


\begin{proof}
Let $w=a_0+a_1+ \cdots+a_{2n} \in \mathbbm {k}[C_{2n+1}]$.  Notice that because the quandle is latin this vector is invariant by both right and left multiplications.
Now we can re-write $u = - \frac{1}{2n+1} \left( f_1 + \cdots + f_{2n}\right)$ as $u = a_0- \frac{1}{2n+1} w$ and thus $u^2=u$.
\end{proof}

Let us find the powers of the augmentation ideal $\Delta(C_5)$ of $\Z[C_5]$.  The elements  $\{ a_0$, $a_1$, $a_2$, $a_3, a_4 \}$ are the  basis elements of $\Z[C_5]$
 with the product $a_i \cdot a_j = a_{3(i+j)}$, where the sum in the additive cyclic group of order 5.
The augmentation ideal $\Delta(C_5)$ has a basis $f_1, f_2, f_3, f_4$ and  it is not difficult to check that  $\Delta^2(C_5)$ has a basis
$$
f_1 - 2 f_3,~~f_2 -  f_3 - f_4,~~f_3 - 2 f_4,~~5 f_4.
$$
Furthermore, $\Delta^3(C_5) = \Delta^2(C_5) \cdot \Delta(C_5)$ has a basis
$$
f_1 - f_3 - 2 f_4,~~f_2 + 2  f_3 - 2 f_4,~~5 f_3,~~5 f_4,
$$
and  $\Delta^4(C_5) = \Delta^3(C_5) \cdot \Delta(C_5)$ has a basis
$$
f_1 + f_2 +  f_3 + f_4,~~5 f_2,~~5 f_3,~~5 f_4.
$$

Using induction on $l$, it is not difficult to formulate the following proposition giving the general powers~$\Delta^l(C_5)$. 

\begin{pro}\label{CommutativeZ5}
Let  $C_5$ be the commutative 5-element quandle.  The powers of the  augmentation ideal $\Delta = \Delta(C_5) =\langle f_1, \cdots, f_4 \rangle$ are given by the formulas:

\begin{eqnarray}
\Delta^{4k} &=& \langle  5^{k-1}(f_1+ \cdots +f_4), \; 5^kf_2, \; 5^kf_3, \; 5^kf_4   \nonumber \rangle,~~k = 1, 2, \ldots \\
\Delta^{4k+1} &=& \langle  5^kf_1, \cdots, \; 5^kf_4   \nonumber \rangle,~~k = 0, 1, \ldots \\
\Delta^{4k+2} &=& \langle  5^k(f_1+f_4), \; 5^k(f_2+2f_4), \; 5^k(f_3+3f_4),  \; 5^{k+1}f_4   \nonumber \rangle,~~k = 0, 1, \ldots \\
\Delta^{4k+3} &=& \langle  5^k(f_1+4f_3+3f_4), \;  5^k(f_2+2f_3+3f_4),\; 5^{k+1}f_3, \; 5^{k+1}f_4,  \nonumber \rangle,~~k = 0, 1, \ldots.
\end{eqnarray}
\end{pro}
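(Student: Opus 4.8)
The plan is to reduce the whole statement to a single identity, $\Delta^{5}=5\Delta$, and then propagate it formally.

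First I would record the multiplication in the $f$-basis. Writing $a_i=a_0+f_i$ with the convention $f_0=0$, the rule $a_i\cdot a_j=a_{3(i+j)}$ gives immediately
\[
f_i\cdot f_j=f_{3(i+j)}-f_{3i}-f_{3j}\qquad(\text{all indices mod }5,\ f_0=0).
\]
More useful, however, is the remark — already used for the idempotent proposition above — that the full sum $S=a_0+a_1+\cdots+a_4$ satisfies $S\cdot v=v\cdot S=\varepsilon(v)\,S$ for every $v\in\mathbb{Z}[C_5]$, since $i\mapsto 3(i+j)$ is a bijection of $\mathbb{Z}_5$. Together with $a_0\cdot f_j=a_{3j}-a_0=f_{3j}$ and the equality $w:=f_1+f_2+f_3+f_4=S-5a_0$, this produces the one computation that powers everything:
\[
w\cdot f_j=\varepsilon(f_j)\,S-5\,a_0\cdot f_j=-5f_{3j},\qquad j=1,2,3,4.
\]

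Next I would take as given the bases of $\Delta,\Delta^{2},\Delta^{3},\Delta^{4}$ exhibited before the statement (the case $\Delta$ is the definition; the remaining three are the routine Hermite-normal-form checks announced there). Since $\Delta^{4}=\langle w,\,5f_2,\,5f_3,\,5f_4\rangle$, every generator of $\Delta^{5}=\Delta^{4}\cdot\Delta$ is either of the form $w\cdot f_j=-5f_{3j}$ or of the form $5f_i\cdot f_j\in5\Delta$. As $j$ ranges over $\{1,2,3,4\}$ so does $3j$, hence $5f_1,\dots,5f_4$ all lie in $\Delta^{5}$; conversely every generator lies in $5\Delta=\langle 5f_1,\dots,5f_4\rangle$. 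Therefore $\Delta^{5}=5\Delta$.

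To finish, I would upgrade this identity using the trivial but decisive observation that, for every subgroup $M\subseteq\mathbb{Z}[C_5]$, one has $(5M)\cdot\Delta=5(M\cdot\Delta)$: the operator $\phi(M):=M\cdot\Delta$ commutes with scaling by $5$. Since $\Delta^{m+1}=\phi(\Delta^{m})$ by the chosen convention, applying $\phi$ repeatedly to $\Delta^{5}=5\Delta$ gives $\Delta^{5+j}=\phi^{\,j}(5\Delta)=5\,\phi^{\,j}(\Delta)=5\,\Delta^{1+j}$ for all $j\ge0$, i.e.\ $\Delta^{\ell+4}=5\,\Delta^{\ell}$ for every $\ell\ge1$. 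Iterating gives $\Delta^{4k+r}=5^{k}\Delta^{r}$ for $r\in\{1,2,3,4\}$ (the line $r=4$ reindexed to start at $k=1$); substituting the four base-case bases, after a cosmetic change of generators modulo the $5f_i$ to match the displayed forms, yields exactly the four formulas. The only place to slip is the interaction with non-associativity: it is tempting, and wrong, to try to make $\Delta^{4}$ behave like an idempotent ideal, whereas the genuine mechanism is the homogeneity of $\phi$ — once $\Delta^{5}=5\Delta$ is in hand, the period-$4$, factor-$5$ pattern for all higher powers is automatic, and everything else is bookkeeping (getting the bases of $\Delta^{2},\Delta^{3},\Delta^{4}$ right and reconciling the two natural presentations of each).
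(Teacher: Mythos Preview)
Your argument is correct. The pivot on the identity $\Delta^{5}=5\Delta$, proved via $w\cdot f_j=-5f_{3j}$, is clean, and your handling of the non-associativity is exactly right: you never use $\Delta^{m+4}=\Delta^{m}\cdot\Delta^{4}$, only the left-normed recursion $\Delta^{k+1}=\Delta^{k}\cdot\Delta$ together with $\mathbb{Z}$-bilinearity, which is what makes $\phi$ commute with scaling and yields $\Delta^{\ell+4}=5\Delta^{\ell}$ for all $\ell\ge 1$.

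The paper, by contrast, simply says ``using induction on $l$'' after listing the bases of $\Delta,\Delta^2,\Delta^3,\Delta^4$; the intended argument is presumably a direct four-case inductive step, multiplying each displayed basis by $\Delta$ and reducing to Hermite form again. Your route is genuinely different in that it isolates the single structural reason behind the period-$4$, factor-$5$ pattern (the element $w=S-5a_0$ forces $w\cdot\Delta\subseteq 5\Delta$ with equality), rather than verifying four separate multiplications. What this buys you is transparency and portability: the same mechanism --- compute $w\cdot f_j$, deduce $\Delta^{p}=p\Delta$, propagate --- should visibly generalise to $C_p$ for odd primes $p$, which the paper's Proposition on $C_7$ suggests but does not explain. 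What the paper's approach buys, conversely, is that it needs no idea at all once the base cases are in hand. Your remark about reconciling the two presentations of each $\Delta^{r}$ (the pre-statement bases versus the displayed ones) is accurate; they do agree modulo the $5f_i$, and it is worth saying so explicitly since the paper does not.
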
 

The proof of the following proposition is similar.

\begin{pro}\label{CommutativeZ7}
Let  $C_7$ be the commutative 7 element quandle.  The powers of the  augmentation ideal $\Delta=\Delta(C_7) =\langle f_1, \cdots, f_6 \rangle$ are given by the formulas:

\begin{eqnarray}
\Delta^{6k} &=& \langle  7^{k-1}(f_1+ \cdots +f_6), \; 7^kf_2, \; \cdots, \; 7^kf_6   \nonumber \rangle,~~k = 1, 2, \ldots\\
\Delta^{6k+1} &=& \langle  7^kf_1, \cdots, \; 7^kf_6   \nonumber \rangle,~~k = 0, 1, \ldots\\
\Delta^{6k+2} &=& \langle  7^{k}(f_1+f_6), \; 7^{k}(f_2+2f_6), \; 7^{k}(f_3+3f_6), \nonumber\\
&& 7^{k}(f_4+4f_6), \; 7^{k}(f_5+5f_6),   \; 7^{k+1}f_6   \nonumber \rangle,~~k = 0, 1, \ldots\\
\Delta^{6k+3} &=& \langle  7^k(f_1+6f_5+3f_6), \;  7^k(f_2+4f_5+f_6),\; 7^k(f_3+f_5+f_6), \nonumber \\
&& 7^k(f_4+4f_5+3f_6),\; 7^{k+1}f_5, \; 7^{k+1}f_6  \nonumber \rangle,~~k = 0, 1, \ldots.\\
\Delta^{6k+4} &=& \langle  7^k(f_1+f_4+3f_5+6f_6), \; 7^k(f_2+2f_4+6f_5+6f_6),  \nonumber \\
&& 7^k(f_3+3f_4+6f_5+3f_6),  \; 7^kf_4, \; 7^kf_5, \; 7^kf_6   \nonumber \rangle,~~k = 0, 1, \ldots.\\
\Delta^{6k+5} &=& \langle  7^k(f_1+6f_3+5f_4+4f_5+3f_6), \;  7^k(f_2+2f_3+3f_4+4f_5+5f_6), \nonumber \\
&& 7^{k+1}f_3, \; 7^{k+1}f_4,  \; 7^{k+1}f_5,  \; 7^{k+1}f_6   \nonumber \rangle,~~k = 0, 1, \ldots.
\end{eqnarray}

\end{pro}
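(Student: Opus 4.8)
The plan is to imitate the proof of Proposition~\ref{CommutativeZ5} essentially verbatim in structure: fix a convenient $\Z$-basis of $\Delta(C_7)$, write down the multiplication table on it, compute the first several powers $\Delta^{2},\dots,\Delta^{7}$ by elementary integer (Hermite normal form) reduction, extract a ``period-six scaling'' identity, and then propagate that identity by induction on the exponent.

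First I would record the multiplication rule. Since in $\Z[C_7]$ the product is $a_i\cdot a_j=a_{4(i+j)}$ (indices mod $7$, because here $n+1=4$), setting $f_i=a_i-a_0$ and $f_0=0$ yields at once
\[
f_i\cdot f_j \;=\; f_{4(i+j)}-f_{4i}-f_{4j},
\]
the exact analogue of Lemma~\ref{E}. As $C_7$ is commutative, $\Z[C_7]$ is a commutative (non-associative) ring, so $\Delta^{2}(C_7)$ is generated by the $21$ products $f_i\cdot f_j$ with $1\le i\le j\le 6$. Next I would compute $\Delta^{2},\dots,\Delta^{7}$ successively: for $\Delta^{2}$ one forms the integer matrix whose rows are these products expressed in the basis $(f_1,\dots,f_6)$ and brings it to Hermite form, producing the six generators $f_1+f_6,\ f_2+2f_6,\ f_3+3f_6,\ f_4+4f_6,\ f_5+5f_6,\ 7f_6$, whose triangular system has determinant $7$, certifying $[\Delta:\Delta^{2}]=7$ and hence that these vectors genuinely span $\Delta^{2}$. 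Then, using $\Delta^{\ell+1}=\Delta^{\ell}\cdot\Delta$, one multiplies each basis element of $\Delta^{\ell}$ by each $f_j$ and reduces again; carrying this out for $\ell=2,3,4,5$ gives $\Delta^{3},\Delta^{4},\Delta^{5},\Delta^{6}$, each agreeing with the claimed basis (with $k=0$, and $k=1$ for $\Delta^{6}$) and multiplying the index in $\Delta$ by $7$ at every step. The crucial output is $\Delta^{7}=\Delta^{6}\cdot\Delta=7\langle f_1,\dots,f_6\rangle=7\,\Delta$, i.e. the $k=1$ instance of the $\Delta^{6k+1}$ formula.

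Finally I would bootstrap the identity $\Delta^{7}=7\,\Delta$ to $\Delta^{\ell+6}=7\,\Delta^{\ell}$ for all $\ell\ge 1$ by induction on $\ell$: the base case is $\ell=1$, and for the step
\[
\Delta^{\ell+7}=\Delta^{\ell+6}\cdot\Delta=(7\,\Delta^{\ell})\cdot\Delta=7\,(\Delta^{\ell}\cdot\Delta)=7\,\Delta^{\ell+1},
\]
where the only facts used are that $\Delta^{a+1}=\Delta^{a}\cdot\Delta$ by definition and that the scalar $7$ pulls out of the ring product, so the non-associativity of $\Z[C_7]$ is irrelevant. Unwinding this gives $\Delta^{6k+r}=7^{k}\Delta^{r}$ for $r\in\{1,2,3,4,5\}$ and $k\ge 0$, and $\Delta^{6k}=7^{k-1}\Delta^{6}$ for $k\ge 1$; substituting the explicit bases of $\Delta^{1},\dots,\Delta^{6}$ found above produces exactly the six displayed formulas.

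The main obstacle is entirely computational: honestly carrying out the six successive Hermite reductions of integer matrices that determine $\Delta^{2},\dots,\Delta^{7}$ as finite-index subgroups of $\Z^{6}$ without arithmetic slips, and in each case certifying that the exhibited six vectors actually span the subgroup by checking that the relevant index equals $7$ (respectively $7^{k}$ in the reduced formulas). Once $\Delta^{7}=7\,\Delta$ is in hand, the remaining induction and the read-off of the general formulas are routine and word-for-word parallel to the $C_5$ case.
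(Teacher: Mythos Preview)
Your proposal is correct and follows essentially the same approach as the paper, which for this proposition merely says ``the proof is similar'' to the $C_5$ case (itself proved by ``induction on $l$'' after computing the first few powers). Your explicit formulation of the induction via the periodicity identity $\Delta^{\ell+6}=7\,\Delta^{\ell}$, obtained from the base computation $\Delta^{7}=7\,\Delta$, is exactly the induction the paper has in mind, stated a bit more cleanly.
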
 

From these propositions we get the following

\begin{cor}
The quandle rings  $\Z[C_5]$ and $\Z[C_7]$   do not contain non-zero idempotents $u$ such that $\varepsilon(u) =0$.
\end{cor}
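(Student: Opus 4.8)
The plan is to show that an idempotent with vanishing augmentation is forced to lie in every power of the augmentation ideal, and then to read off from Proposition~\ref{CommutativeZ5} and Proposition~\ref{CommutativeZ7} that these powers are contained in $5^k\Delta$ (respectively $7^k\Delta$) for $k\to\infty$; since the augmentation ideals $\Delta(C_5)$ and $\Delta(C_7)$ are free abelian groups of finite rank, the intersection of the subgroups $5^k\Delta$ (respectively $7^k\Delta$) over all $k$ is zero, so the idempotent must vanish. This is the same mechanism that yields the corollary on $\Z[R_3]$ above.

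First I would reduce to the augmentation ideal. As noted at the start of Section~\ref{Idemp-Dihedral}, an idempotent $u$ with $\varepsilon(u)=0$ lies in $\Delta:=\Delta(C_{2n+1})$, and since the subgroupoid generated by $u$ has a single element we have $u=u^{(m)}$ for every $m\ge1$, where $u^{(m)}=(\cdots((uu)u)\cdots u)$ denotes the left-bracketed product of $m$ copies of $u$ (this is the ``$u^n=u$'' remark recorded just before Lemma~\ref{E}). An induction on $m$, with base case $u^{(1)}=u\in\Delta=\Delta^1$ and inductive step $u^{(m+1)}=u^{(m)}\cdot u\in\Delta^m\cdot\Delta=\Delta^{m+1}$, then shows $u\in\Delta^m(C_{2n+1})$ for all $m\ge1$.

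Next I would feed in the explicit bases. For $C_5$, Proposition~\ref{CommutativeZ5} gives $\Delta^{4k+1}(C_5)=\langle 5^kf_1,\dots,5^kf_4\rangle$, and since $\{f_1,\dots,f_4\}$ is a $\Z$-basis of the free module $\Delta(C_5)$ this yields $\Delta^{4k+1}(C_5)\subseteq 5^k\Delta(C_5)$ for every $k\ge0$. Taking $m=4k+1$ in the previous paragraph gives $u\in 5^k\Delta(C_5)$ for all $k$, hence $u\in\bigcap_{k\ge0}5^k\Delta(C_5)=0$, so $u=0$, contradicting $u\neq0$. The argument for $C_7$ is identical, using $\Delta^{6k+1}(C_7)=\langle 7^kf_1,\dots,7^kf_6\rangle\subseteq 7^k\Delta(C_7)$ from Proposition~\ref{CommutativeZ7}.

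The only point requiring a little care — and the main, rather minor, obstacle — is the non-associativity of the quandle ring: one must keep the bracketing on the left at each step, so that $u^{(m)}\cdot u$ really lands in the ideal product $\Delta^m\cdot\Delta=\Delta^{m+1}$, rather than invoking any associativity of the product; once this is observed, the remainder is just tracking divisibility by powers of $5$ (respectively $7$) in the given bases.
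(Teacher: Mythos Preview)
Your argument is correct and is precisely the mechanism the paper has in mind: the corollary is stated without proof, as an immediate consequence of Propositions~\ref{CommutativeZ5} and~\ref{CommutativeZ7}, exactly via the observation (recorded before Lemma~\ref{E}) that an idempotent $u\in\Delta$ satisfies $u=u^{(m)}\in\Delta^m$ for all $m$, together with $\bigcap_m\Delta^m=0$ read off from the explicit bases. Your care about the bracketing matching the paper's convention $\Delta^{k+1}=\Delta^k\Delta$ is well placed, and the rest is just as you say.
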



Suppose now that $u$ is an idempotent in  $\Z[C_5]$ for which $\varepsilon(u) =1$. Then 
$$
u = a_0 + \delta = a_0 + \b_1 f_1 + \b_2 f_2 + \b_3 f_3 + \b_4 f_4
$$
for some integers $\b_i$. Then $u^2 = a_0 + 2 \delta a_0 + \delta^2$. It is not difficult to see that
$$
\delta a_0 =  \b_1 f_3 + \b_2 f_1 + \b_3 f_4 + \b_4 f_2
$$
and
$$
\delta^2 = (\b_1^2 - 2 \b_2^2 - 2 \b_1 \b_2 - 2 \b_2 \b_3 - 2 \b_2 \b_4 + 2 \b_3 \b_4) f_1 + 
$$
$$
+ (\b_2^2 - 2 \b_4^2 + 2 \b_1 \b_3 - 2 \b_1 \b_4 - 2 \b_2 \b_4 - 2 \b_3 \b_4) f_2 
+ (- 2 \b_1^2 + \b_3^2 - 2 \b_1 \b_2 - 2 \b_1 \b_3 - 2 \b_1 \b_4 + 2 \b_2 \b_4) f_3 +
$$
$$
+ (- 2 \b_3^2 + \b_4^2 + 2 \b_1 \b_2 - 2 \b_1 \b_3 - 2 \b_2 \b_3 - 2 \b_3 \b_4) f_4.
$$

This element $u$ is an idempotent if and only if $2 \delta a_0 - \delta = -\delta^2$. This is equivalent to the system
$$
\begin{cases}
2 \b_2 - \b_1 = -\b_1^2 + 2 \b_2^2 + 2 \b_1 \b_2 + 2 \b_2 \b_3 + 2 \b_2 \b_4 - 2 \b_3 \b_4,    \\ 
2 \b_4 - \b_2 = -\b_2^2 + 2 \b_4^2 - 2 \b_1 \b_3 + 2 \b_1 \b_4 + 2 \b_2 \b_4 + 2 \b_3 \b_4,    \\ 
2 \b_1 - \b_3 = -\b_3^2 + 2 \b_1^2 + 2 \b_1 \b_2 + 2 \b_1 \b_3 + 2 \b_1 \b_4 - 2 \b_2 \b_4,    \\ 
2 \b_3 - \b_4 = -\b_4^2 + 2 \b_3^2 - 2 \b_1 \b_2 + 2 \b_1 \b_3 + 2 \b_2 \b_3 + 2 \b_3 \b_4.
\end{cases}
$$

\begin{qst}
Is it true that this system does not have  integer  solutions in which more than one component are non-zero? 
\end{qst}

 
\section{$m$-almost Latin Quandles}\label{2-almost}

As we have seen above, if a quandle ring has only trivial idempotents, then the automorphism group of the quandle ring is equal to the automorphism group of the quandle.  In this section we consider a quandle ring of some generalizations of latin quandle  and find its idempotents and automorphisms.

Almost latin quandles are a generalization of latin quandles.  The following definition can be found in \cite{PY}.
  \begin{definition} 
Let $m\geq 1$ be an integer.  A quandle $X$ is said to be \index{$m$-almost latin quandle}{\it $m$-almost latin} if it satisfies the following conditions:
\begin{enumerate}
\item The stabilizer $Stab(a) =\{ x \in X | \; R_x(a)=a \}$ has order $m$  for each $a \in X$.
\item The equation $L_a(x)=b$ has a unique solution for each $b \in X \setminus Stab(a)$.
	\end{enumerate}
\end{definition}
Notice that when $m=1$, an $m$-almost latin quandle is simply a latin quandle.

Let us consider some example of 2-almost latin quandle (see \cite[Example 13.17]{BES}).
This quandle  $X= \{1, 2, \ldots, 6 \}$ is the involutory  connected quandle of order 6 given in terms of its right multiplications as follows:
\begin{eqnarray*}
&& R_1=(3\;5)(4\;6), \quad R_2=(3\;6)(4\;5), \quad R_3=(1\;5)(2\;6),\\
&& R_4=(1\;6)(2\;5), \quad R_5=(1\;3)(2\;4), \quad R_6=(1\;4)(2\;3).
\end{eqnarray*}
It is easy to see that $X$ is the disjoint union of trivial subquandles $\{ 1, 2 \}$, $\{ 3, 4 \}$, $\{ 5, 6 \}$, and then $\Z[X]$ has  non-trivial idempotents
$$
\alpha e_1 + (1 - \alpha) e_2,~~\beta e_3 + (1 - \beta) e_4,~~\gamma e_5 + (1 - \gamma) e_6,~~ \alpha, \beta, \gamma \in \Z.
$$ 
Let us show that any idempotent of  $\Z[X]$ has a form from this list. More accurately, the following proposition holds.

\begin{pro} \label{IdX}
Any idempotent of  $\Z[X]$  is of one of the following $3$ forms:  $\alpha e_1 +(1-\alpha)e_2$, $\beta e_3+(1-\beta)e_4$ and $\gamma e_5 +(1-\gamma)e_6$, where $\alpha, \beta, \gamma \in \mathbb{Z}$.
\end{pro}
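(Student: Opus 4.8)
The plan is to turn the equation $u^{2}=u$ for a general element $u=\sum_{i=1}^{6}\alpha_i e_i\in\mathbb{Z}[X]$ into an explicit finite system and then exploit the decomposition $X=\{1,2\}\sqcup\{3,4\}\sqcup\{5,6\}$ into trivial subquandles. First I would write out the quandle operation table from the given right multiplications $R_1,\dots,R_6$ and record, for each $k\in X$, the set of pairs $(i,j)$ with $i*j=k$ (each turns out to consist of six pairs). Comparing the coefficients of $e_k$ on the two sides of $u^{2}=u$ then yields six quadratic equations in $\alpha_1,\dots,\alpha_6$. Applying the augmentation gives $\varepsilon(u)^{2}=\varepsilon(u)$, so $\varepsilon(u)\in\{0,1\}$.

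Next I would pass to the \emph{block weights} $p=\alpha_1+\alpha_2$, $q=\alpha_3+\alpha_4$, $r=\alpha_5+\alpha_6$ attached to the three trivial subquandles, so that $p+q+r=\varepsilon(u)$. Adding the six equations in the three natural pairs (the pair of equations indexed by the two elements of a block) collapses the system to
\[
p^{2}+2qr=p,\qquad q^{2}+2pr=q,\qquad r^{2}+2pq=r
\]
(these are precisely the idempotent equations in $\mathbb{Z}[R_3]$ for the image of $u$ under the quandle projection $X\to R_3$ collapsing each block to a point). Subtracting these in pairs gives $(p-q)(p+q-2r-1)=0$ together with its two cyclic analogues; combined with $p+q+r\in\{0,1\}$ and integrality, a short case check forces $(p,q,r)=(0,0,0)$ when $\varepsilon(u)=0$, and $(p,q,r)$ to be a permutation of $(1,0,0)$ when $\varepsilon(u)=1$.

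In the case $\varepsilon(u)=0$ I would set $\alpha_2=-\alpha_1=:-a$, $\alpha_4=-\alpha_3=:-b$, $\alpha_6=-\alpha_5=:-c$ and substitute back into the six equations; they reduce to $a=4bc$, $b=4ac$, $c=4ab$, whence $abc(64abc-1)=0$ forces $abc=0$ and then $a=b=c=0$, i.e. $u=0$. In the case $\varepsilon(u)=1$ I would first observe that $\sigma=(1\,3\,5)(2\,4\,6)$ is a quandle automorphism of $X$ cyclically permuting the three blocks (an immediate check that conjugation by $\sigma$ sends $R_i$ to $R_{\sigma(i)}$), so the induced ring automorphism $\sigma_*$ of $\mathbb{Z}[X]$ permutes idempotents and lets me assume $(p,q,r)=(1,0,0)$. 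Writing $\alpha_1=\alpha$, $\alpha_2=1-\alpha$, $\alpha_3=b$, $\alpha_4=-b$, $\alpha_5=c$, $\alpha_6=-c$ and substituting, the six equations reduce to $b=2c(2\alpha-1)$ and $c=2b(2\alpha-1)$, hence $b=4b(2\alpha-1)^{2}$; since $2\alpha-1$ is an odd integer, $4(2\alpha-1)^{2}\ge 4>1$, forcing $b=0$ and then $c=0$. Thus $u=\alpha e_1+(1-\alpha)e_2$, and applying $\sigma_*$ and $\sigma_*^{2}$ produces exactly the families $\beta e_3+(1-\beta)e_4$ and $\gamma e_5+(1-\gamma)e_6$; conversely each such element is immediately checked to be idempotent.

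I expect the main obstacle to be purely organizational: correctly assembling the six quadratic coefficient equations from the multiplication table and keeping the case analysis on $(p,q,r)$ clean. The conceptual point that removes the real difficulty is the reduction to the block weights (equivalently, the projection onto $R_3$), which both cuts the number of free parameters and makes the integrality obstructions ($64abc=1$ and $4(2\alpha-1)^{2}=1$ having no integer solutions) transparent.
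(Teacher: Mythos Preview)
Your proposal is correct and follows essentially the same route as the paper: project to $\Z[R_3]$ via the block sums $(p,q,r)$, use that the only nonzero idempotents there are the trivial ones to force $(p,q,r)\in\{(0,0,0)\}\cup\{\text{perms of }(1,0,0)\}$, dispose of the kernel case by the computation $a=4bc,\ b=4ac,\ c=4ab$, and in the remaining case reduce by the quandle automorphism $(1\,3\,5)(2\,4\,6)$ to $(p,q,r)=(1,0,0)$ and solve. The only real difference is that the paper quotes the classification of idempotents in $\Z[R_3]$ from \cite{BPS1}, whereas you re-derive it by the short case analysis on $(p-q)(p+q-2r-1)=0$; this makes your argument self-contained at the cost of a few extra lines, but the structure is identical.
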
 

\begin{proof}
Any element of $\Z[X]$ has a form
 $$
\alpha_1 e_1 + \alpha_2 e_2 + \beta_1 e_3 +\beta_2 e_4 + \gamma_1 e_5 +  \gamma_2 e_6,~~ \alpha_i, \beta_i, \gamma_i \in \Z.
 $$

Let $R_3=\{ \overline 0, \overline 1,\overline 2\}$ be the dihedral 3 element quandle. As we know this quandle is commutative. Also, it is easy to check that the map 
$f: X \rightarrow R_3$, which acts by the rules
$$
f(1) = f(2) = \overline 0,~~f(3) = f(4) = \overline 1,~~f(5) = f(6) = \overline 2,
$$
is a homomorphism of quandles. By linearity this homomorphism can be extended to a homomorphism of quandle rings $F: \Z[X] \rightarrow \Z[R_3]$. The kernel of this homomorphism  consists of the following elements
$$
\ker (F) = \{ \alpha (e_1 - e_2) + \beta(e_3 - e_4) + \gamma (e_5 - e_6) ~ | ~  \alpha, \beta, \gamma \in \Z \}.
$$

Suppose that $u$ is a non-zero idempotent in $\Z[X]$. Since under a homomorphism any   idempotent goes to an  idempotent, then $F(u)$ is an idempotent in $\Z[R_3]$.
By Proposition 4.3 of \cite{BPS1}  $F(u) = 0$ or $F(u) \in \{ e_{\overline 0}, e_{\overline 1}, e_{\overline 2} \}$ is a trivial idempotent.

Suppose at first that $F(u) = 0$. It means that $u$ lies in the kernel $\ker (F)$. Let
$$
u = \alpha (e_1 - e_2) + \beta(e_3 - e_4) + \gamma (e_5 - e_6) 
$$
for some $\alpha, \beta, \gamma \in \Z$. Then
$$
u^2 = 4 \alpha \beta (e_5 - e_6) + 4 \alpha \gamma (e_3 - e_4) +4 \beta \gamma (e_1 - e_2)
$$
and it is easy to see that $u^2 = u$ if and only if $\alpha =  \beta =  \gamma = 0$.

Further, if $F(u) = e_{\overline j}$, $j \in \{ 0, 1, 2 \}$, then 
$$
u_i = e_i + \alpha (e_1 - e_2) + \beta (e_3 - e_4) + \gamma (e_5 - e_6) 
$$
for some $i \in \{ 1, 2, \ldots , 6 \}$ and $\alpha, \beta, \gamma \in \Z$.

Let us take $i = 1$, then 
$$
u_1^2 = e_1 + (\a + 4 \b \g) (e_1 - e_2) + 2 \g (1 + 2 \a) (e_3 - e_4) + 2 \b (1 + 2 \a) (e_5 - e_6) 
$$
and the equality $u_1^2 = u_1$ holds if and only if the following system has integer solutions:
$$
\begin{cases}
\a = \a + 4 \b \g,\\
\b = 2  \g (1 + 2 \a), \\
\g = 2 \b (1 + 2 \a). 
\end{cases}
$$ 
From the first equation follows that $\b=0$ or $\g = 0$, but from the second and the third equations we get $\b=\g = 0$. Hence,  $u_1 =  e_1 + \alpha (e_1 - e_2)$  is an idempotent in $\Z[1, 2]$.

Let us take $i = 2$, then 
$$
u_2^2 = e_2 + (\a + 4 \b \g) (e_1 - e_2) + 2 \g ( -1 + 2 \a) (e_3 - e_4) + 2 \b (-1 + 2 \a) (e_5 - e_6) 
$$
and the equality $u_2^2 = u_2$ holds if and only if the following system has integer solutions:
$$
\begin{cases}
\a = \a + 4 \b \g,\\
\b = 2  \g (-1 + 2 \a), \\
\g = 2 \b (-1 + 2 \a). 
\end{cases}
$$ 
From the first equation follows that $\b=0$ or $\g = 0$, but from the second and the third we get $\b=\g = 0$. Hence,  $u_2 =  e_2 + \alpha (e_1 - e_2)$ is an idempotent in $\Z[1, 2]$. 

Instead of considering all other cases where $i \in \{ 3, 4, 5, 6 \}$, we can note that the permutation of elements of $X$ given by
$$
\varphi = (1 3 5) (2 4 6)
$$
is an automorphism of $X$, and it induces an automorphism of  $\Z[X]$. Hence the elements $\varphi(u_1)$, $\varphi(u_2)$ have the form of $u_3$,  $u_4$ respectively, and the elements $\varphi^2(u_1)$, $\varphi^2(u_2)$ have the form of $u_5$,  $u_6$ respectively. From this observation follows that any idempotent of $\Z[X]$ lies in one of the following rings: $\Z[\{1, 2\}]$,  $\Z[\{3, 4\}]$ and  $\Z[\{5, 6\}]$. 
\end{proof}


Let us consider  automorphisms of $\Z[X]$. It is known (see page 24 of  \cite{Maq}) that the  automorphism group $\Aut(X)$ of the  quandle $X$ is the symmetric group on four letters: $\Aut(X)\cong S_4$. Thus the automorphism group  $\Aut (\mathbb{Z}[X])$ of the quandle ring $\mathbb{Z}[X]$ should contain $S_4$ as a subgroup. 
On the other hand, $\Z[X]$ contains 3 quandle rings of 2-element trivial quandles: $\Z[\{1, 2\}]$,  $\Z[\{3, 4\}]$ or  $\Z[\{5, 6\}]$.

 Let us find the group of automorphisms of $\Z[T_2]$, there $T_2 = \{ t_1, t_2 \}$ is the 2-element trivial quandle. Any endomorphism $\varphi$ of $\Z[T_2]$ sends idempotent to idempotent and, hence, has a form
$$
\varphi (t_1) = \alpha t_1 + (1 - \alpha) t_2,~~\varphi (t_2) = \beta t_1 + (1 - \beta) t_2,~~\alpha, \beta \in \Z.
$$
This endomorphism is a bijection if and only if $\beta = \alpha + \varepsilon$, where $\varepsilon \in \{ \pm 1 \}$. Hence, we have the following 

\begin{lemma} \label{ATQ}
Any automorphism $\varphi$ of $\Z[T_2]$ has a form
$$
\varphi (t_1) = \alpha t_1 + (1 - \alpha) t_2,~~\varphi (t_2) = (\alpha + \varepsilon) t_1 + (1 - \alpha  -  \varepsilon) t_2,~~\alpha  \in \Z,~~\varepsilon \in \{ \pm 1 \}.
$$
\end{lemma}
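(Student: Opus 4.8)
The plan is to exploit the extremely simple multiplicative structure of $\Z[T_2]$. Writing $T_2 = \{t_1, t_2\}$ with $t_i * t_j = t_i$, the product in the quandle ring is $t_i \cdot t_j = t_i$, and consequently $x \cdot y = \varepsilon(y)\, x$ for all $x, y \in \Z[T_2]$. First I would record the idempotents: for $u = a t_1 + b t_2$ one gets $u^2 = (a+b)\,u$, so $u^2 = u$ forces $u = 0$ or $\varepsilon(u) = a + b = 1$; hence the nonzero idempotents of $\Z[T_2]$ are exactly the elements $\alpha t_1 + (1-\alpha)t_2$, $\alpha \in \Z$ (this is also the $m=2$ instance of Proposition 13.21 of \cite{BES} recalled above).

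Next, given an automorphism $\varphi$ of $\Z[T_2]$, I would observe that a ring automorphism sends nonzero idempotents to nonzero idempotents, and $t_1, t_2$ are such; hence $\varphi(t_1) = \alpha t_1 + (1-\alpha)t_2$ and $\varphi(t_2) = \beta t_1 + (1-\beta)t_2$ for some $\alpha, \beta \in \Z$, and in particular $\varepsilon(\varphi(t_1)) = \varepsilon(\varphi(t_2)) = 1$. Conversely, I would check that every $\Z$-linear map of this shape is automatically multiplicative: it suffices to verify $\varphi(t_i \cdot t_j) = \varphi(t_i) \cdot \varphi(t_j)$ on basis elements, and there $\varphi(t_i \cdot t_j) = \varphi(t_i)$ while $\varphi(t_i) \cdot \varphi(t_j) = \varepsilon(\varphi(t_j))\, \varphi(t_i) = \varphi(t_i)$, using $\varepsilon(\varphi(t_j)) = 1$; bilinearity does the rest. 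Thus the automorphisms of $\Z[T_2]$ are precisely the invertible $\Z$-linear maps of that form.

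It then remains to determine when such a map is invertible. In the basis $t_1, t_2$ it is represented by the matrix with columns $(\alpha,\, 1-\alpha)$ and $(\beta,\, 1-\beta)$, whose determinant equals $\alpha(1-\beta) - \beta(1-\alpha) = \alpha - \beta$. A $\Z$-linear endomorphism of $\Z^2$ is bijective iff its determinant is a unit of $\Z$, i.e. $\alpha - \beta = \pm 1$, equivalently $\beta = \alpha + \varepsilon$ with $\varepsilon \in \{\pm 1\}$; substituting yields the stated normal form $\varphi(t_2) = (\alpha + \varepsilon)t_1 + (1 - \alpha - \varepsilon)t_2$.

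I do not anticipate a genuine difficulty here; the single point deserving attention is the converse direction — that a linear map of the displayed shape is indeed a ring automorphism and not merely a module isomorphism — and this is immediate from the identity $x\cdot y = \varepsilon(y)\,x$ together with $\varepsilon(\varphi(t_j)) = 1$.
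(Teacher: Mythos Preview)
Your proof is correct and follows essentially the same approach as the paper: identify the nonzero idempotents, note that an automorphism must send $t_1, t_2$ to such elements, and then impose invertibility via the determinant $\alpha - \beta \in \{\pm 1\}$. Your argument is in fact more complete than the paper's sketch, since you explicitly verify the converse direction (that any $\Z$-linear map of the stated shape is automatically multiplicative via $x\cdot y = \varepsilon(y)\,x$), a point the paper leaves implicit.
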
 

 Let $\psi$ be a ring automorphism of $\mathbb{Z}[X]$.  Then for any $1\leq i \leq 6$, the image $\psi(e_i)$ is idempotent and thus it is one of the 3 idempotents  $\alpha e_1 +(1-\alpha)e_2$, $\beta e_3+(1-\beta)e_4$ and $\gamma e_5 +(1-\gamma)e_6$, where $\alpha, \beta, \gamma \in \mathbb{Z}$.  Thus the map $\psi$ will permute the three 2-dimensional  subspaces $\langle e_1,e_2 \rangle$, $\langle e_3,e_4 \rangle$ and $\langle e_5,e_6 \rangle$ of $\mathbb{Z}[X]$.  In the matrix form $\psi$ will be one of the following $6$ block-matrices (corresponding to the $6$ permutations of the symmetric group $S_3$):
\begin{equation*}
\begin{pmatrix}
\begin{matrix} 
A & 0 & 0 \\
0 & B& 0\\
0 & 0 &C
\end{matrix}
\end{pmatrix},
\begin{pmatrix}
\begin{matrix} 
A & 0 & 0 \\
0 & 0& B\\
0 & C&0
\end{matrix}
\end{pmatrix},
\begin{pmatrix}
\begin{matrix} 
0 & 0 & C \\
0& B & 0\\
A & 0&0
\end{matrix}
\end{pmatrix},
\begin{pmatrix}
\begin{matrix} 
0& B & 0 \\
A &0 & 0\\
0 & 0 & C
\end{matrix}
\end{pmatrix},
\begin{pmatrix}
\begin{matrix} 
0 & B & 0 \\
0 & 0& C\\
A & 0&0
\end{matrix}
\end{pmatrix},
\begin{pmatrix}
\begin{matrix} 
0 & 0& A \\
B & 0 & 0\\
0 & C&0
\end{matrix}
\end{pmatrix},
\end{equation*}
 where $A,B,C \in GL(2, \mathbb{Z})$ correspond to automorphisms of $\Z[\{1, 2\}]$,  $\Z[\{3, 4\}]$ and  $\Z[\{5, 6\}]$.

Here is an explicit example of an automorphism: 
	  \begin{eqnarray}
	  \psi(e_1)&=&e_2, \; \psi(e_2)=-e_1+2e_2, \; \psi(e_3)=e_4 \nonumber\\
	  \psi(e_4)&=&e_3+2e_4, \; \psi(e_5)=2e_5-e_6, \; \psi(e_6)=3e_5-2e_6, \nonumber
	  \end{eqnarray}
which in matrix notation has the form
	  \begin{equation*}
\psi=\begin{pmatrix}
\begin{matrix} 
0 & -1&0 & 0 &0&0 \\
1&2& 0 & 0 &0&0 \\
0 & 0 &0 & 1 &0 & 0\\
0&0&1&2&0&0\\
0 & 0 &0&0 &2&3\\
0 & 0 &0&0 & -1 &-2
\end{matrix}
\end{pmatrix}
\end{equation*}

From this observation and Lemma \ref{ATQ} follows the following

\begin{pro}\label{AutX}
Let $X_1 = \{ 1, 2 \}$, $X_2 = \{ 3, 4 \}$,  $X_3 = \{ 5, 6\}$ be the trivial subquandles of $X$. Then any automorphism $\psi$ of $\Z[X]$ can be presented in the form $\psi = \varphi_1 \varphi_2 \varphi_3 \pi$, where $\varphi_i \in \Aut(\Z[X_i])$ and $\pi$ is a permutation of the quandles $X_1, X_2, X_3$.
\end{pro}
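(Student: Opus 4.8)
The plan is to combine the idempotent classification of Proposition~\ref{IdX} with the way the three trivial subquandles $X_1,X_2,X_3$ multiply inside $\Z[X]$. Write $V_i:=\Z[X_i]=\langle e_{2i-1},e_{2i}\rangle$, so that $\Z[X]=V_1\oplus V_2\oplus V_3$ as a $\Z$-module and each $V_i$ is a subring (because $X_i$ is a subquandle). Reading off the right translations $R_1,\dots,R_6$, one checks that $R_y$ fixes precisely the block containing $y$; equivalently the partition $\{X_1,X_2,X_3\}$ is the set of classes of the $\Aut(X)$-invariant equivalence relation $x\sim y\iff x*y=x$, and for $x\in X_i$, $y\in X_j$ with $i\neq j$ one has $x*y\in X_k$, the remaining block. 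Hence
\[
V_i\cdot V_i\subseteq V_i,\qquad V_i\cdot V_j\subseteq V_k\quad(\{i,j,k\}=\{1,2,3\}).
\]
By Proposition~\ref{IdX}, every nonzero idempotent of $\Z[X]$ lies in a single $V_i$.

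Let $\psi\in\Aut(\Z[X])$. Each $\psi(e_\ell)$ is a nonzero idempotent (nonzero since $\psi$ is injective), hence lies in $V_{\sigma(\ell)}$ for some $\sigma(\ell)\in\{1,2,3\}$. Since $R_2$ fixes $1$ we have $e_1e_2=e_{1*2}=e_1$, so $\psi(e_1)=\psi(e_1)\psi(e_2)\in V_{\sigma(1)}\cdot V_{\sigma(2)}$; if $\sigma(1)\neq\sigma(2)$ this product lies in the third block $V_k$, forcing $\psi(e_1)\in V_{\sigma(1)}\cap V_k=\{0\}$, absurd. The same argument with $e_3e_4=e_3$ and $e_5e_6=e_5$ yields $\sigma(3)=\sigma(4)$ and $\sigma(5)=\sigma(6)$. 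Therefore $\psi(V_i)\subseteq V_{\tau(i)}$ for a map $\tau\colon\{1,2,3\}\to\{1,2,3\}$; since $\psi$ is onto and the $V_i$ are direct summands, $\tau$ is a bijection and $\psi(V_i)=V_{\tau(i)}$, i.e. $\tau\in S_3$.

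Next I realize $\tau$ by a quandle automorphism. Each automorphism of $X$ permutes the classes of $\sim$, so there is a homomorphism $\Aut(X)\to S_3$; the automorphism $\varphi=(1\,3\,5)(2\,4\,6)$ (used already in the proof of Proposition~\ref{IdX}) induces the $3$-cycle $(X_1\,X_2\,X_3)$, so the image has order divisible by $3$, and since $\Aut(X)\cong S_4$ has no normal subgroup of order $8$ the image is all of $S_3$. Choose $\pi\in\Aut(X)\subseteq\Aut(\Z[X])$ inducing $\tau$ and set $\chi:=\psi\pi^{-1}$. Then $\chi\in\Aut(\Z[X])$ satisfies $\chi(V_i)=V_i$ for every $i$, so its restriction $\varphi_i:=\chi|_{V_i}$ is an automorphism of $\Z[X_i]$ (of the explicit form given by Lemma~\ref{ATQ}). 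As $\chi$ respects the decomposition $\Z[X]=V_1\oplus V_2\oplus V_3$, it is the block-diagonal map $v_1+v_2+v_3\mapsto\varphi_1(v_1)+\varphi_2(v_2)+\varphi_3(v_3)$, which we abbreviate $\varphi_1\varphi_2\varphi_3$; hence $\psi=\chi\pi=\varphi_1\varphi_2\varphi_3\,\pi$.

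The only genuinely $X$-specific input — and the point I expect to require the most care — is the inclusion $V_i\cdot V_j\subseteq V_k$ for distinct $i,j$ used in the second paragraph: it is precisely what prevents $\psi$ from mixing the blocks, and it must be extracted from the multiplication table of $X$ (in a general quandle an automorphism of the quandle ring need not respect such a decomposition). It should also be kept in mind that the factors $\varphi_i$ are not individually automorphisms of $\Z[X]$ — apart from the identity, none of them extends to $\Z[X]$ by the identity on the other two blocks — so $\varphi_1\varphi_2\varphi_3$ must be read as the block-diagonal endomorphism relative to $\Z[X]=V_1\oplus V_2\oplus V_3$.
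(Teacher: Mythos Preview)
Your argument is correct and follows the same route as the paper: use Proposition~\ref{IdX} to see that each $\psi(e_i)$ lands in one of the three blocks $V_j$, conclude that $\psi$ permutes $V_1,V_2,V_3$, and then peel off the block permutation to leave a block-diagonal map whose diagonal entries are described by Lemma~\ref{ATQ}. Your version is in fact a bit more careful than the paper's discussion preceding the proposition: the paper passes directly from ``each $\psi(e_i)$ lies in some $V_j$'' to ``$\psi$ permutes the three $2$-dimensional subspaces,'' whereas you supply the missing step via the relations $e_{2i-1}e_{2i}=e_{2i-1}$ together with $V_i\cdot V_j\subseteq V_k$ to force $\psi(e_{2i-1})$ and $\psi(e_{2i})$ into the same block, and you also justify explicitly that the induced $\tau\in S_3$ is realized by an element of $\Aut(X)$.
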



\section{Questions for Further Research}\label{Questions:}

\begin{qst}
\begin{enumerate}
\item
 Is it true that any finite commutative quandle has a form
$$
C_{2n_1+1} \oplus C_{2n_2+1} \oplus  \ldots  \oplus C_{2n_l+1} 
$$
for some $n_i$ and $l$?

\item
 What can we say about commutative quandles in which for any elements $x, y$ the equality $x \bar{*} y = y \bar{*}  x$  holds?
 \end{enumerate}
\end{qst}

 \begin{qst}
What can we say about idempotents of integer quandle ring of  quandle of cyclic type?
  \end{qst}
  
  \begin{qst}
 Is it true that the free commutative 2-generated quandle is isomorphic to the quandle on the set
$$
\left\{ \frac{m}{2^k} ~|~m \in \mathbb{Z},  k \in \{ 0, 1, 2, \ldots \}   \right\}
$$
with operation $a * b = \frac{1}{2} (a + b)$ and generators $0$ and $1$?
\end{qst}

\section*{Competing Interests}
The authors declare that they have no competing interests.

\section*{Funding}
Valeriy G. Bardakov is supported by the Ministry of Science and Higher Education of Russia (agreement No. 075-02-2026-1339). 	Mohamed Elhamdadi was partially supported by Simons Foundation collaboration grant 712462.  
The authors thank Luc Ta, Alex Iskra and the referee for the useful  comments and suggestions.

\section*{Data Availability}
No datasets were generated or analyzed during the current study.


\end{document}